\date{} \setlength{\textwidth}{15truecm}
\newtheorem{theorem}{Theorem}[section]
\newtheorem{proposition}[theorem]{Proposition}
\newtheorem{remark}[theorem]{Remark}
\newtheorem{example}[theorem]{Example}
\numberwithin{equation}{section}
\begin{document}

\title[]{On nonlinear Kantorovich problems for cost functions of a special form}
\maketitle

\begin{center}
S.N. Popova~\footnote{Moscow Institute of Physics and Technology; National Research University Higher School of Economics.}
\end{center}

\vskip .2in

{\bf Abstract.}
We study Kantorovich type optimal transportation problems with nonlinear cost functions, including dependence on conditional measures of transport plans. A range of nonlinear Kantorovich problems for cost functions of a special form is considered and
results on existence (or non-existence) of optimal solutions are proved. We also establish the connection between the nonlinear Kantorovich problem with the cost function of some special form and the Monge problem with convex dominance. 

Keywords: optimal transportation problem, nonlinear Kantorovich problem, Monge problem, conditional measure, convex dominance. 

\vskip .2in

\section{Introduction}

Recently new interesting modifications of the classical Kantorovich optimal transportation problem
have been introduced and studied. Kantorovich problem deals with two Radon probability measures $\mu$ and $\nu$
on completely regular  topological spaces $X$ and $Y$ respectively, which are supposed to be Polish spaces in most of the works. 
Measures $\mu$ and $\nu$ are called marginals. 
Consider the set $\Pi(\mu,\nu)$ of all Radon probability measures on the product $X\times Y$ with projections $\mu$ and $\nu$ on the factors. Measures in this set are called Kantorovich plans or transport plans. 
The Kantorovich problem consists in minimization
over the set  $\Pi(\mu,\nu)$ of the integral
$$
J_h(\sigma)=\int_{X\times Y} h(x,y) \, \sigma(dx\, dy)
$$
for a given Borel measurable nonnegative function $h$ on $X\times Y$, called a cost function. 
The cost function $h$ is typically supposed to be continuous
or at least lower semicontinuous in order to guarantee that a minimum exists. The functional $J_h$ is linear
with respect to $\sigma$, so we deal with minimization of a linear functional on a convex set, which is compact in the weak topology. Thereby, typical sufficient conditions for the existence of a minimum reduce to verification of the lower semicontinuity of this functional. 

The Monge problem for the same triple $(\mu, \nu, h)$ consists in finding a Borel mapping
$T \colon X \to Y$ taking $\mu$ into $\nu$, that is $\nu = \mu \circ T^{-1}$, $\mu \circ T^{-1}(B) = \mu(T^{-1}(B))$ for
all Borel sets $B \subset Y$, for which the integral
$$
\int_X h(x, T(x)) \mu(dx)
$$
is minimal. In general, there is only an infimum, denoted by $M_h(\mu,\nu)$.
It can happen that there is no minimum even for nice cost functions $h$. However, if both
measures are Radon, $h$ is continuous,
and $\mu$ has no atoms and is separable,  then $M_h(\mu,\nu)$ coincides with the Kantorovich minimum
(see \cite{BKP}; for Polish spaces this was proved in~\cite{Prat}).
Basic facts about Monge and Kantorovich problems can be found in \cite{AG}, \cite{ABS21},
\cite{BK12}, \cite{Sant}, \cite{V09}.

A modification of the Kantorovich problem was studied in the papers \cite{G}, \cite{Alib}, \cite{BaB}, \cite{Ac}, \cite{BaP}, where the novelty was that the cost function $h$ was allowed to depend also on the plan $\sigma$, that is,
$h$ is a Borel function on $X\times Y\times \mathcal{P}(X\times Y)$, where $\mathcal{P}(X\times Y)$ is the space of Radon
probability measures on $X\times Y$ equipped with the weak topology. Thus, now the functional
$$
J_h(\sigma)=\int_{X\times Y} h(x,y,\sigma) \, \sigma(dx\, dy)
$$
is not linear in $\sigma$. So we call problems of this type {\it nonlinear Kantorovich optimal transportation problems}.
Such a problem can be written as
\begin{equation}\label{nonlinK}
\int_{X\times Y} h(x,y,\sigma) \, \sigma(dx\, dy)\to\min, \quad \sigma\in \Pi(\mu,\nu).
\end{equation}

This research has been continued in \cite{BM22}, \cite{BP22}, \cite{BRez}, \cite{BPR}, \cite{B22}, \cite{B-umn22}. 
In \cite{BRez} it was shown that there exists a minimum in the problem (\ref{nonlinK}) in the case where $h(x,y,\sigma)$ is lower semicontinuous on $X\times Y\times \mathcal{P}(X\times Y)$. Although the functional $J_h$ is not linear anymore, it is still lower semicontinuous with respect to~$\sigma$. 
However, the lower semicontinuity of $h$ with respect to the last argument fails
in the specific situation, where $h$ depends on the measure $\sigma$ through its conditional measures $\sigma^x$ on~$Y$.

For a given measure $\sigma \in \mathcal P(X \times Y)$ with the projection $\mu$ on $X$ 
conditional measures $\sigma^x$, $x \in X$, are Radon probability measures on $Y$ with the property
$$
\int_{X\times Y}\varphi(x)\psi(y)\, \sigma(dx\, dy)
=\int_X \int_Y \varphi(x)\psi(y)\, \sigma^x(dy)\, \mu(dx)
$$
for all bounded Borel functions $\varphi$ on $X$ and $\psi$ on $Y$, where it is also assumed that
the function $x\mapsto \sigma^x(B)$ is $\mu$-measurable for every Borel set $B\subset Y$.
Conditional measures exist under rather broad assumptions, for example, in case of Souslin spaces.
The fact that a measure $\sigma$ with the projection $\mu$ on $X$ has conditional measures
is usually expressed as the equality
$$
\sigma(dx\, dy)=\sigma^x(dy)\, \mu(dx). 
$$

The functional associated with conditional measures
can have the form
$$
\int_X\int_Y  h(x,y,\sigma^x) \, \sigma^x(dy)\, \mu(dx)
$$
or a somewhat simpler form
$$
\int_X h(x,\sigma^x) \, \mu(dx).
$$

Introduce the notation and terminology that will be used in this paper. 
The Borel $\sigma$-algebra of a topological space $X$ will be denoted by $\mathcal{B}(X)$.
Denote by $\mathcal{B}a(X)$ the Baire $\sigma$-algebra on $X$, which is generated by all continuous functions on $X$. 
If $X$ is a completely regular Souslin space, then $\mathcal B(X) = \mathcal{B}a(X)$ (see \cite{B07}). 
A nonnegative Borel measure $\mu$ is called Radon if for every Borel set $B$ and every $\varepsilon>0$
there is a compact set $K\subset B$ with $\mu(B\backslash K)<\varepsilon$.
Recall that on Polish spaces and Souslin spaces (images of Polish spaces under continuous mappings)
all Borel measures are Radon.

The set of all Radon probability measures on $X$ is denoted by $\mathcal{P}(X)$.
A family $M\subset \mathcal{P}(X)$ is called uniformly tight if for every $\varepsilon>0$ there is a compact set $K$ such that
$\mu(X\backslash K)<\varepsilon$ for all $\mu\in M$.

The image of a measure $\mu$ on a measurable space $(X,\mathcal{B})$ under a measurable mapping $F$ to a measurable space
$(E,\mathcal{E})$ is denoted by $\mu\circ F^{-1}$ and defined by the formula
$$
(\mu\circ F^{-1})(E)=\mu(F^{-1}(E)), \quad E\in \mathcal{E}.
$$

The weak topology on $\mathcal{P}(X)$ is induced by the weak topology on the whole space
of signed Radon measures on $X$ generated by the seminorms of the form
$$
\eta \mapsto \biggl|\int_X f\, d\eta\biggr|,
$$
where $f$ is a bounded continuous function on $X$.

For any Radon probability measure $Q$ on the space of measures
$\mathcal{P}(Y)$ with the weak topology the barycenter is defined by the formula
$$
\beta_Q:=\int_{\mathcal{P}(Y)} p\, Q(dp),
$$
where this vector integral with values in the space of measures is understood as the equality
$$
\beta_Q(A)=\int_{\mathcal{P}(Y)} p(A)\, Q(dp)
$$
for all Borel sets $A\subset Y$. It is known that the function
$p\mapsto p(A)$ is Borel measurable on $\mathcal{P}(Y)$ and the measure $\beta_Q$
is $\tau$-additive (see \cite[Proposition~8.9.8 and Corollary~8.9.9]{B07}).
The measure $\beta_Q$ is Radon if and only if
the measure $Q$ is concentrated
on a countable union of uniformly tight compact sets in $\mathcal{P}(Y)$ (see \cite[Proposition~3.1]{B-umn22}).
A practically useful sufficient condition is this: the space $Y$ is Souslin completely regular,
because in this case the space $\mathcal{P}(Y)$ is also Souslin (see \cite{B18}), hence all Borel measures on it are Radon.

If $P$ is a Radon measure on $X\times \mathcal{P}(Y)$, $\mu$ is its projection on~$X$ and
there are conditional
measures $P^x$ on~$\mathcal{P}(Y)$ with respect to~$\mu$, then the barycenter of the projection $P_{\mathcal{P}}$ of $P$ on $\mathcal{P}(Y)$
is given by
$$
\beta_{P_{\mathcal{P}}}(B)=\int_X\int_{{\mathcal{P}(Y)}}  p(B)\, P^x(dp)\, \mu(dx).
$$

The goal of this paper is to study the nonlinear Kantorovich problems for a variety of special classes of cost functions. 
Section~2 addresses the nonlinear Kantorovich problem with the cost function $h(x, y, \sigma^x)$. We show that the existence of a solution in this problem does not hold if the function $h$ is continuous and the function $p \mapsto h(x, y, p)$ is convex in $p$ for all $x \in X, y \in Y$. In Section~3 we prove a new existence result in the nonlinear problem with conditional
measures, where we consider cost functions of the form $h(x, p) = \Psi(g(p) - f(x))$. 
Section~4 concerns the nonlinear Kantorovich problem with the cost function $h(x, g(\sigma^x))$, 
where $g(p) = \int_Y F(y) p(dy)$ and $F \colon Y \to \mathbb R^d$ is a Borel measurable function. We show the connection of
the nonlinear problem with the cost function $h(x, g(\sigma^x))$ and a certain Monge problem with convex dominance. 
Section~4 also contains examples of non-existence of minimizers for the nonlinear problem with the cost function $h(x, g(\sigma^x))$. Moreover, we consider the generalization of the nonlinear Kantorovich problem with the cost function $h(x, g(\sigma^x))$ 
to the infinite-dimensional case, where $g(p) = \int_Y F(y) p(dy)$, $F \colon Y \to U$ is a Borel measurable mapping and $U$ is a separable Banach space. 

\section{Non-existence for the Kantorovich problem with the cost function $h(x, y, \sigma^x)$}

Consider the nonlinear Kantorovich problem with conditional measures
\begin{equation}\label{xp}
\int_{X} h(x, \sigma^x) \mu(dx) \to \inf, \quad \sigma \in \Pi(\mu, \nu), \ \sigma(dx dy) = \sigma^x(dy) \mu(dx). 
\end{equation}

In \cite{BaB} it has been proved that if $X, Y$ are Polish spaces, the function $h \colon X \times \mathcal P(Y) \to \mathbb R$ is lower semicontinuous on $X \times \mathcal P(Y)$ and convex in $p$, then there exists a minimum in the problem~(\ref{xp}). 

In \cite{BRez} the existence theorem for the nonlinear Kantorovich problem has been generalized to the case of completely regular topological spaces. 

\begin{theorem}[\cite{BRez}] \label{th_xp} 
Let $X, Y$ be completely regular topological spaces, let $\mu$ and $\nu$ be Radon probability measures on $X$ and $Y$ respectively. Suppose that the function $h \colon X \times \mathcal P(Y) \to [0, +\infty)$ is measurable relative to $\mathcal Ba(X) \times \mathcal Ba(\mathcal P(Y))$, lower semicontinuous on the sets of the form $K \times S$, where $K \subset X$ is a compact and $S \subset P(Y)$ is uniformly tight, and the function $p \mapsto h(x, p)$ is convex in $p$ for all $x \in X$. Then 
 there exists a minimum in the problem~(\ref{xp}). 
\end{theorem}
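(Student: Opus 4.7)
The plan is to linearize (\ref{xp}) via a lifting to measures on $X \times \mathcal{P}(Y)$ and then apply the direct method. To each plan $\sigma \in \Pi(\mu,\nu)$ with conditional measures $\sigma^x$ associate the Radon probability measure $P_\sigma$ on $X \times \mathcal{P}(Y)$ defined as the image of $\mu$ under the Baire measurable map $x \mapsto (x, \sigma^x)$. Then
$$\int_X h(x, \sigma^x)\, \mu(dx) = \int_{X \times \mathcal{P}(Y)} h(x,p)\, P_\sigma(dx\, dp) =: J(P_\sigma).$$
Let $\mathcal{M}$ denote the set of all Radon probability measures $P$ on $X \times \mathcal{P}(Y)$ whose projection on $X$ is $\mu$ and for which the barycenter of the projection $P_\mathcal{P}$ of $P$ on $\mathcal{P}(Y)$ equals $\nu$. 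Then $P_\sigma \in \mathcal{M}$ by the barycentric formula recalled in the introduction, so the infimum in (\ref{xp}) is at least $\inf_{P \in \mathcal{M}} J(P)$.

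The reverse inequality uses the convexity assumption. Given $P \in \mathcal{M}$ with conditional measures $P^x$ on $\mathcal{P}(Y)$ relative to $\mu$, set $\widetilde{\sigma}^x := \beta_{P^x}$ and $\widetilde{\sigma}(dx\, dy) := \widetilde{\sigma}^x(dy)\, \mu(dx)$. The barycenter identity from the introduction forces $\widetilde{\sigma} \in \Pi(\mu,\nu)$, and Jensen's inequality for the convex function $p \mapsto h(x, p)$ on $\mathcal{P}(Y)$ yields
$$h(x, \widetilde{\sigma}^x) = h(x, \beta_{P^x}) \le \int_{\mathcal{P}(Y)} h(x, p)\, P^x(dp)$$
for every $x$; integrating against $\mu$ gives $\int_X h(x, \widetilde{\sigma}^x)\, \mu(dx) \le J(P)$. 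Consequently, the two infima coincide and any minimizer of $J$ over $\mathcal{M}$ produces a minimizer of (\ref{xp}).

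It thus suffices to minimize the \emph{linear} functional $J$ over $\mathcal{M}$. Using Radonness of $\mu$ and $\nu$, one establishes uniform tightness of $\mathcal{M}$: the projection on $X$ is fixed at $\mu$, and for the $\mathcal{P}(Y)$-factor the barycentric identity gives $\int p(Y \setminus K)\, P_\mathcal{P}(dp) = \nu(Y \setminus K)$, so selecting compacts $K_n \subset Y$ with $\nu(Y \setminus K_n)$ decaying fast enough and applying Markov's inequality with a diagonal argument produces a uniformly tight $S \subset \mathcal{P}(Y)$ that captures each $P_\mathcal{P}$ up to a prescribed $\varepsilon$. Weak closedness of $\mathcal{M}$ follows from continuity of $P \mapsto \beta_{P_\mathcal{P}}$ tested against bounded continuous functions on $Y$. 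The functional $J$ is then shown to be lower semicontinuous on $\mathcal{M}$ by exhaustion: on each compact product $K \times S$ the integrand $h$ is LSC by hypothesis, monotone truncation by $N$ produces a bounded LSC integrand, integration against the restriction of $P$ to $K \times S$ is LSC in $P$, and uniform tightness lets these approximations exhaust $J(P)$ from below.

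The principal obstacle is the last lower semicontinuity step: $h$ is assumed LSC only on compact products $K \times S$, never on all of $X \times \mathcal{P}(Y)$, and the weak topology on $\mathcal{P}(X \times \mathcal{P}(Y))$ is not generally metrizable in the completely regular setting, so one must work with uniformly tight subfamilies and Baire-measurable approximations rather than with the sequential arguments available for Polish spaces. The convexity hypothesis plays no role in securing this semicontinuity; it enters only via the Jensen step that descends from a minimizer of the lifted linear problem to a minimizer of (\ref{xp}).
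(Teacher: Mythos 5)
The paper states this theorem only as a citation from [BRez] and contains no proof of its own, but your argument reproduces the standard proof of that result and coincides with the machinery this paper deploys for Theorem~\ref{th_psi}: lift to the linear Kantorovich problem with fixed barycenter over $\Pi^{\nu}(\mu)$ (cf.\ Proposition~\ref{prop1}), solve it by uniform tightness plus lower semicontinuity on sets $K\times S$, and descend to a plan via the barycenters of the conditional measures using Jensen's inequality and the convexity of $p\mapsto h(x,p)$. The route and the division of labour between the hypotheses are the same; the points you leave implicit (Radonness of the lifted measure $P_{\sigma}$, existence of the conditional measures $P^{x}$ in the completely regular setting, and the fact that Jensen's inequality on $\mathcal{P}(Y)$ needs lower semicontinuity in addition to convexity) are exactly the standard technicalities handled in [BRez].
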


Consider the nonlinear Kantorovich problem
\begin{equation}\label{xyp}
\int_{X \times Y} h(x, y, \sigma^x) \sigma(dx dy) \to \inf, \quad \sigma \in \Pi(\mu, \nu), \ \sigma(dx dy) = \sigma^x(dy) \mu(dx),
\end{equation}
where $h \colon X \times Y \times \mathcal P(Y) \to \mathbb R$ is a Borel measurable function. 

We show that for the cost function of the form $h(x, y, \sigma^x)$ such an existence theorem fails. The nonlinear problem~(\ref{xyp}) may not have a minimizer if $h$ is a bounded continuous function on $X \times Y \times \mathcal P(Y)$ (or, moreover, Lipschitz) and is convex in $p$. We show that this can occur even if both marginals coincide with Lebesgue measure on the unit interval. 

Below we need the following notation. 
The standard Kantorovich--Rubinshtein norm on Radon measures on the metric space $X$ is defined by the formula
$$
\|\sigma\|_{KR}=\sup_{f\in {\rm Lip}_1, \, |f|\le 1} \int_X f\, d\sigma,
$$
where ${\rm Lip}_1$ is the class of all $1$-Lipschitz functions on~$X$. This norm generates the weak topology
on the subset of nonnegative measures (see \cite[Theorem~8.3.2]{B07}).

\begin{theorem}\label{th_xyp}
Let $X = Y = [0, 1]$ and let $\mu = \nu = \lambda$ be Lebesgue measure on the interval $[0, 1]$. 
There exists a bounded continuous function $h \colon X \times Y \times \mathcal P(Y) \to \mathbb R$ (moreover, $h$ is Lipschitz when $\mathcal P(Y)$  is considered with the Kantorovich--Rubinshtein norm), such that
the function $p \mapsto h(x, y, p)$ is convex in $p$ for all $x \in X, y \in Y$, and the nonlinear Kantorovich problem~(\ref{xyp}) has no minimizer. 
\end{theorem}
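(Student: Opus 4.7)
The plan is to exhibit an explicit bounded continuous cost $h$, convex in $p$ and Lipschitz on $X\times Y\times(\mathcal{P}(Y),\|\cdot\|_{KR})$, for which the infimum of $J(\sigma):=\int_{X\times Y} h(x,y,\sigma^x)\,\sigma(dx\,dy)$ over $\Pi(\lambda,\lambda)$ is not attained. The guiding structural observation is that one may rewrite $J(\sigma)=\int_X G(x,\sigma^x)\,\mu(dx)$ with $G(x,p):=\int_Y h(x,y,p)\,p(dy)$. The convex-relaxation argument that underlies Theorem~\ref{th_xp} would yield existence if $G$ were convex in $p$; but even when $h$ itself is convex in $p$, the induced $G$ need not be convex in $p$, because $p$ appears both as the argument of $h$ and as the integrating measure. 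This is precisely the structural gap to be exploited.

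Concretely, I would take a cost of the composite form $h(x,y,p)=H\bigl(x,y,\int F(z)\,p(dz)\bigr)$, where $F\colon[0,1]\to\mathbb{R}$ is bounded and Lipschitz and $H$ is bounded, Lipschitz and convex in its third argument. Since $p\mapsto\int F\,dp$ is Lipschitz with respect to $\|\cdot\|_{KR}$ for bounded Lipschitz $F$, this automatically gives the required regularity and convexity of $h$, while leaving enough freedom in the choice of $H$ and $F$ to arrange that $G(x,p)$ fails to be convex in $p$. The minimizing sequence is then taken to consist of Monge plans $\sigma_n=(\mathrm{id},T_n)_\#\lambda$ for measure-preserving $T_n\colon[0,1]\to[0,1]$ chosen to oscillate finely enough that $\sigma_n$ converges weakly to a limit with non-Dirac conditionals; a convenient choice is $T_n(x)=2^n x\bmod 1$, for which Weyl's equidistribution theorem yields $\sigma_n\to\lambda\otimes\lambda$ weakly. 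Since $\sigma_n^x=\delta_{T_n(x)}$, the value $J(\sigma_n)=\int_0^1 h(x,T_n(x),\delta_{T_n(x)})\,dx$ converges to an explicit limit $c$, the candidate infimum.

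The heart of the proof is then to show that $J(\sigma)>c$ strictly for every $\sigma\in\Pi(\lambda,\lambda)$. One splits the analysis according to the structure of $\sigma^x$: on the subset of $X$ where $\sigma^x$ is a Dirac (so $\sigma$ is Monge on that piece) the value is controlled by the explicit form of $h$ together with the marginal constraint, which forces the transport map to be distributed as $\lambda$ on $Y$; on the complementary set, a strict Jensen-type inequality coming from the strict convexity of $h$ in $p$ yields the needed strict gap. The main obstacle, and the technical heart of the argument, is precisely to produce this strict inequality within the continuity and KR-Lipschitz regularity imposed on $h$ --- which is exactly why the composite form $h(x,y,p)=H(x,y,\int F\,dp)$ with $H$ strictly convex in its last argument is the right ansatz: it simultaneously supplies the convexity of $h$ in $p$ required by the statement and the source of strict convexity needed to close the gap.
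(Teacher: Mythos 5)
Your diagnosis of the structural gap is exactly the one the paper exploits: convexity of $p\mapsto h(x,y,p)$ does not make $p\mapsto\int_Y h(x,y,p)\,p(dy)$ convex, and the paper's second summand $h_2(y,p)=\varphi_2(y)g_1(p)$ (linear in $p$) integrates to the non-convex product $g_1(\sigma^x)g_2(\sigma^x)$ --- which is indeed of your composite form $H(x,y,\int F\,dp)$. But your proposal stops at the point where the actual work begins: no explicit $H$ and $F$ are produced, and the claim that $J(\sigma)>c$ for every plan is only asserted. The sketched mechanism for proving it has a real hole. For a Monge plan, $\sigma^x=\delta_{T(x)}$ and every Jensen-type inequality is an equality, so ``strict convexity in $p$'' supplies no gap there; you would have to show separately that no Monge-type plan (and more generally no plan at all) attains the limiting value, and your claim that on the Dirac part ``the marginal constraint forces the transport map to be distributed as $\lambda$'' is false, since the marginal constraint is global and the non-Dirac part of $\sigma$ also contributes to the $Y$-marginal. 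A simple instance of your ansatz, $H(x,y,u)=(u-F(y))^2$, illustrates the failure: the infimum $0$ is attained by the identity map.

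The paper closes precisely this gap with two ingredients your plan lacks. First, an additional convex term $h_1(x,p)=\min_{t}\|p-(t\nu^1_x+(1-t)\nu^2_x)\|_{KR}$ forces the conditional measures of any candidate minimizer into an explicit $x$-dependent segment $[\nu^1_x,\nu^2_x]$ (in particular ruling out Dirac conditionals as near-optimal), after which the product term forces $\sigma^x\in\{\nu^1_x,\nu^2_x\}$ a.e. Second --- and this is the genuinely nontrivial step --- it invokes the result of \cite{BPR} that no measure in $\Pi(\lambda,\lambda)$ can have $\sigma^x\in\{\nu^1_x,\nu^2_x\}$ for a.e.\ $x$; the $x$-dependence of $\nu^1_x,\nu^2_x$ is essential here, since with $x$-independent alternatives such a selection trivially exists. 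The minimizing sequence is then built from piecewise-constant conditionals alternating between $\nu^1$ and $\nu^2$ on dyadic intervals, not from oscillating Monge maps. Without an analogue of this impossibility statement, your argument cannot be completed.
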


\begin{proof}
For every $x \in [0, 1]$ take the following two probability measures on $[0, 1]$:
$$
\nu^1_x(dy) = 2I_{[0, (1+x)/4] \cup [(3+x)/4, 1]}\, dy, \quad \nu^2_x(dy) = 2I_{[(1+x)/4, (3+x)/4]} \, dy,
$$
where $I_S$ is the indicator function of a set $S$. The mappings $x\mapsto \nu^1_x$ and $x\mapsto \nu^2_x$ are $1$-Lipschitz in variation, hence also in the Kantorovich--Rubinshtein norm.

Set 
$$
h_1(x, p) = \min_{t \in [0, 1]} \|p - (t \nu^1_x + (1-t) \nu^2_x)\|_{KR}.
$$ 
The function $h_1(x, p)$ is convex in $p$ and is $1$-Lipschitz in every argument separately, hence is jointly Lipschitz.

Let $\varphi_1, \varphi_2 \colon [0, 1] \to \mathbb R$ be nonnegative Lipschitz functions with supports in the intervals  
$[0, 1/4]$ and $[1/2, 3/4]$ respectively, such that $\int_0^1 \varphi_i(y) dy = 1/2$ for any $i \in \{1, 2\}$. 
Set 
$$
g_i(p) = \int_0^1 \varphi_i(y) p(dy), \quad \mbox{where  } i \in \{1, 2\}.
$$ 
Then we have $g_i(\nu^j_x) = \delta_i^j$ for every $x \in [0, 1]$ (where $\delta_i^j  = 1$ if $i = j$ and $\delta_i^j  = 0$ if $i \neq j$, $i, j \in \{1, 2\}$), since for the restrictions of measures $\nu^j_x$ to $[0, 1/4]$ and $[1/2, 3/4]$ we have 
$\nu^1_x|_{[0, 1/4]} = 2dy$, $\nu^2_x|_{[1/2, 3/4]} = 2dy$ and the measures $\nu^1_x$ and $\nu^2_x$ are mutually singular for every $x \in [0, 1]$. 
 
Set 
$$
h_2(y, p) = \varphi_2(y) g_1(p), \qquad h(x, y, p) = h_1(x, p) + h_2(y, p).
$$ 
The function $h_2(y, p)$ is jointly Lipschitz and linear in $p$, hence the function $h(x, y, p)$ is jointly Lipschitz and convex in $p$. 
We have 
$$
\int_{X \times Y} h_2(y, \sigma^x) \sigma(dx dy) = \int_X \int_Y h_2(y, \sigma^x) \sigma^x(dy) \mu(dx) = \int_X g_1(\sigma^x) g_2(\sigma^x) \mu(dx)
$$
and
$$
\int_{X \times Y} h(x, y, \sigma^x) \sigma(dx dy) = \int_X h_1(x, \sigma^x) \mu(dx) + \int_X g_1(\sigma^x) g_2(\sigma^x) \mu(dx). 
$$

We show that the infimum in this problem is zero, but it is not attained.
Suppose that there exists a measure $\sigma \in \Pi(\mu, \nu)$ with
$$
\int_{X \times Y} h(x, y, \sigma^x) \sigma(dx dy) = 0. 
$$ 
Then $h_1(x, \sigma^x) = 0$ for almost every $x$, hence
$$
\sigma^x = t(x) \nu^1_x + (1-t(x)) \nu^2_x
$$ 
for almost every $x$, where $t(x) \in [0, 1]$. In this case we obtain $g_1(\sigma^x) = t(x)$, $g_2(\sigma^x) = 1 - t(x)$. 
Therefore, 
$$
\int_{X \times Y} h(x, y, \sigma^x) \sigma(dx dy) = \int_X t(x) (1 - t(x)) \mu(dx). 
$$ 
This implies that $t(x) \in \{0, 1\}$ for almost every $x$, i.e. $\sigma^x \in \{\nu^1_x, \nu^2_x\}$ for almost every $x$. 
As we have shown in \cite{BPR}, such a measure $\sigma \in \Pi(\mu, \nu)$ does not exist. 

Let us show that there exists a sequence of measures $\sigma_n \in \Pi(\mu, \nu)$ with
$$
\int_{X \times Y} h(x, y, \sigma_n^x) \sigma_n(dx dy) \to 0.
$$
Set
$$
\sigma_n^x = \nu^1_{(2k+1)/2^n} \quad \hbox{ if }\ x \in [(2k)/2^n, (2k+1)/2^n),
$$
$$
\sigma_n^x = \nu^2_{(2k+1)/2^n} \quad \hbox{ if } \ x \in [(2k+1)/2^n, (2k+2)/2^n),
$$
where $k \in \{0, \ldots, 2^{n-1} - 1\}$,
and take the measures $\sigma_n(dx\, dy) = \sigma_n^x(dy)\, dx$. Then $\sigma_n \in \Pi(\mu, \nu)$, 
because the projection on the first factor is obviously Lebesgue measure and the same is true for the second factor, 
i.e., the integral of $\sigma_n^x$ over $[0,1]$ is Lebesgue measure, since 
$$
\int_{(2k)/2^n}^{(2k+2)/2^n} \sigma_n^x \, dx = \frac{1}{2^n} (\nu^1_{(2k+1)/2^n} + \nu^2_{(2k+1)/2^n})
 = \frac{1}{2^{n - 1}}\, dy
$$
for all $k \in \{0, \dots, 2^{n - 1} - 1\}$.
By construction 
\begin{multline*}
\int_0^1 h_1(x, \sigma_n^x)\, dx \le \sum_{k = 0}^{2^{n-1} - 1}
\int_{(2k)/2^n}^{(2k+1)/2^n} \|\nu^1_{(2k+1)/2^n} - \nu^1_x\|_{KR}\, dx
\\ + \sum_{k = 0}^{2^{n-1} - 1} \int_{(2k+1)/2^n}^{(2k+2)/2^n} \|\nu^2_{(2k+1)/2^n} - \nu^2_x\|_{KR}\, dx \le 1/2^n. 
\end{multline*}
Moreover, $$\int_{X \times Y} h_2(x, \sigma_n^x) \sigma_n(dx dy) = 0,$$
since $g_1(\sigma_n^x) g_2(\sigma_n^x) = 0$ for every $x \in [0, 1]$. 
Therefore, $\int_{X \times Y} h(x, y, \sigma_n^x) \sigma_n(dx dy) \to 0$. 

Thus, the infimum in the nonlinear problem~(\ref{xyp}) is not attained.

\end{proof}

\section{An existence result for the Kantorovich problem with the cost function $h(x, \sigma^x)$}

Consider the nonlinear Kantorovich problem 
\begin{equation}\label{psi}
\int_X h(x, \sigma^x) \mu(dx) \to \inf, \quad \sigma \in \Pi(\mu, \nu), \ \sigma(dx dy) = \sigma^x(dy) \mu(dx),
\end{equation}
for the cost function of the form $h(x, p) = \Psi(g(p) - f(x))$, where $f \colon X \to \mathbb R^d$, $g \colon \mathcal P(Y) \to \mathbb R^d$ and $\Psi \colon \mathbb R^d \to \mathbb R$.  

\begin{theorem}\label{th_psi}
Let $X$ and $Y$ be Souslin spaces, $\mu \in \mathcal P(X)$, $\nu \in \mathcal P(Y)$.  
Suppose that $h(x, p) = \Psi(g(p) - f(x))$, where $f \colon X \to \mathbb R^d$ is a continuous function, 
$g \colon \mathcal P(Y) \to \mathbb R^d$ is a bounded continuous function,  
$\Psi \colon \mathbb R^d \to \mathbb R$ is a nonnegative strictly convex function, 
$\Psi \in C^{1, 1}_{loc}(\mathbb R^d)$, that is, $\Psi$ is differentiable and its gradient is locally Lipschitz. 
Suppose that $\mu(f^{-1}(A)) = 0$ for every set $A \subset \mathbb R^d$ with  Hausdorff dimension at most $d - 1$, if $d > 1$, and $\mu$ is atomless, if $d = 1$. 
Assume that one of the following holds:

{\rm(i)} the sets $\{p \in \mathcal P(Y): g(p) = c\}$ are convex for all $c \in \mathbb R^d$,

{\rm(ii)} the sets $\{p \in \mathcal P(Y): g(p) \le c\}$ are convex for all $c \in \mathbb R^d$ and
the function $\Psi$ is increasing, where we consider the coordinatewise order on $\mathbb R^d$: for two vectors $u = (u_1, \dots, u_d), v = (v_1, \dots, v_d) \in \mathbb R^d$ we have $u \le v$ if $u_i \le v_i$ for all $i \in \{1, \dots, d\}$,
and $\Psi(u) \le \Psi(v)$ for all $u, v \in \mathbb R^d$, $u \le v$. 

Then there exists a minimum in the problem~(\ref{psi}). 
\end{theorem}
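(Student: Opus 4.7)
My plan is the direct method combined with a Young-measure lift. First, I take a minimizing sequence $\{\sigma_n\}\subset \Pi(\mu,\nu)$ with $\int_X h(x,\sigma_n^x)\,\mu(dx)\to \inf$. Since $X,Y$ are Souslin and $\mu,\nu$ are Radon, the set $\Pi(\mu,\nu)$ is uniformly tight on $X\times Y$, so up to a subsequence $\sigma_n\to\tilde\sigma$ weakly for some $\tilde\sigma\in\Pi(\mu,\nu)$. The main obstacle to a direct application of Theorem~\ref{th_xp} is that $h(x,p)=\Psi(g(p)-f(x))$ need not be convex in $p$ when $g$ is not affine, so the functional $\sigma\mapsto \int h(x,\sigma^x)\mu(dx)$ is not obviously weakly lower semicontinuous.

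To bypass this, I associate with each $\sigma_n$ the probability measure $Q_n(dx\,dp)=\mu(dx)\,\delta_{\sigma_n^x}(dp)$ on $X\times \mathcal P(Y)$, which has $X$-marginal $\mu$ and whose $\mathcal P(Y)$-marginal has barycenter $\nu$. Since $Y$ and hence $\mathcal P(Y)$ is Souslin and $\nu$ is Radon, $\{Q_n\}$ is uniformly tight on $X\times \mathcal P(Y)$, so after extraction $Q_n\to Q^*$ weakly, and both the marginal and the barycenter constraints pass to $Q^*$ by continuity of these maps. Continuity and boundedness of $g$, continuity of $f$, and the local Lipschitz behavior of $\Psi$ then let me pass to the limit in
\[
\int_X h(x,\sigma_n^x)\,\mu(dx)=\int_{X\times \mathcal P(Y)}\Psi(g(p)-f(x))\,Q_n(dx\,dp),
\]
to obtain $\int \Psi(g(p)-f(x))\,Q^*(dx\,dp)=\inf$.

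I then define $\sigma^{*,x}:=\beta_{Q^{*,x}}$; by the barycenter condition on $Q^*$, $\sigma^*\in\Pi(\mu,\nu)$. Strict convexity of $\Psi$ and Jensen applied to each $Q^{*,x}$ give
\[
\int_{\mathcal P(Y)}\Psi(g(p)-f(x))\,Q^{*,x}(dp)\ge \Psi\!\bigl(\bar c(x)-f(x)\bigr),\qquad \bar c(x):=\int g(p)\,Q^{*,x}(dp),
\]
with equality only when $g(p)$ is $Q^{*,x}$-a.s.~constant in $p$. Assuming this concentration holds for $\mu$-a.e.~$x$, condition~(i) says that the convex level set $\{g=\bar c(x)\}$ contains the whole support of $Q^{*,x}$ and hence its barycenter $\sigma^{*,x}$, giving $g(\sigma^{*,x})=\bar c(x)$; condition~(ii), combined with the coordinatewise monotonicity of $\Psi$ and the convexity of $\{g\le c\}$, yields the analogous inequality $\Psi(g(\sigma^{*,x})-f(x))\le \Psi(\bar c(x)-f(x))$. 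Either way $\int_X h(x,\sigma^{*,x})\,\mu(dx)\le \inf$, so $\sigma^*$ is a minimizer.

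The hard step I expect to be the main obstacle is precisely forcing this concentration of $Q^{*,x}$ on a single level (resp.~sublevel) set of $g$ for $\mu$-a.e.~$x$. My plan is a contradiction argument: if on some $E\subset X$ with $\mu(E)>0$ the measure $Q^{*,x}$ fails to be so concentrated, strict convexity of $\Psi$ produces a quantitative Jensen gap, and I would construct a competitor $\tilde Q$ with the same $X$-marginal and the same barycenter projection but strictly smaller cost, using $\Psi\in C^{1,1}_{\mathrm{loc}}$ to quantify the gain and perturbing $Q^{*,x}$ on measurable pieces of $E$. The hypothesis that $f_*\mu$ charges no set of Hausdorff dimension at most $d-1$ (and is atomless when $d=1$) is what should allow such a perturbation to be rearranged in a measurable way compatible with the barycenter constraint, by ensuring the $f$-values are spread sufficiently in $\mathbb R^d$; the failure of exactly this spreading is what blocks the minimizing procedure in the non-existence example of Theorem~\ref{th_xyp}.
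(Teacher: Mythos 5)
Your overall architecture --- lifting to measures $Q$ on $X\times\mathcal P(Y)$ with $X$-marginal $\mu$ and barycenter constraint $\beta_{Q_{\mathcal P}}=\nu$, obtaining an optimal lifted plan by compactness/lower semicontinuity, and then projecting back by taking $\sigma^x=\beta_{Q^x}$ --- is the same as the paper's (which works with the fixed-barycenter problem~(\ref{Kfix}) and Proposition~\ref{prop1}). But the step you yourself flag as the main obstacle, namely that $Q^{*,x}$ must be concentrated on a single level set $\{p:g(p)=\bar c(x)\}$ for $\mu$-a.e.~$x$, is exactly the mathematical content of the theorem, and your sketch of a ``competitor $\tilde Q$'' does not constitute a proof and faces a real obstruction: any perturbation of $Q^{*,x}$ on a set $E$ of positive measure that exploits the Jensen gap at those $x$ must be compensated elsewhere to preserve the barycenter constraint $\int_X\beta_{Q^x}\,\mu(dx)=\nu$, which couples all the fibers; quantifying that the local gain beats the global compensation cost is precisely what a naive perturbation cannot do. Moreover, your reading of the hypothesis on $f_*\mu$ (that it ``spreads the $f$-values'' to permit a rearrangement) does not match its actual role.

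The paper closes this gap with duality rather than perturbation: by Proposition~\ref{prop1}(iii) the optimal lifted plan $P$ is concentrated on a strongly $h$-monotone set, i.e.\ there are potentials $\varphi,\psi$ with $\varphi(x)+\psi(p)\le\Psi(g(p)-f(x))$ and equality on $\Gamma$. One then forms $\chi(v)=\inf_p\bigl(\Psi(g(p)-v)-\psi(p)\bigr)$, observes that $\chi(v)-\lambda|v|^2$ is locally concave because $\Psi\in C^{1,1}_{loc}$ and $g$ is bounded, hence $\chi$ is differentiable off a set of Hausdorff dimension $d-1$; at a differentiability point $v_0=f(x_0)$ the equality case of the envelope gives $\nabla\Psi(g(p_0)-f(x_0))=\nabla\chi(f(x_0))$ for every $p_0$ with $(x_0,p_0)\in\Gamma$, and injectivity of $\nabla\Psi$ (from strict convexity) forces $g(p_0)$ to be the same for all such $p_0$. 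The hypothesis $\mu(f^{-1}(A))=0$ for $A$ of dimension $\le d-1$ is used exactly here, to discard the non-differentiability set of $\chi$ (and for $d=1$ the exceptional set is countable and the atoms $f^{-1}(c_j)$ are handled separately using atomlessness of $\mu$ and a Borel map realizing $\pi_j$ as an image of $\mu|_{X_j}$ --- a case your proposal does not address at all). Without some substitute for this monotonicity-plus-differentiability argument, your proof is incomplete at its decisive step. The remaining parts of your proposal (lower semicontinuity of the lifted functional, stability of the marginal and barycenter constraints under weak limits, and the Jensen/convexity endgame under (i) and (ii)) are essentially correct, modulo the routine tightness justifications you gloss over.
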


For the proof of Theorem \ref{th_psi} we consider the linear Kantorovich problem with a fixed barycenter $\beta \in \mathcal P(Y)$:
\begin{equation}\label{Kfix}
\int_{X\times \mathcal{P}(Y)} h(x, p)\, P(dx\, dp)\to \min, \quad  P \in \Pi^\beta(\mu), 
\end{equation}
where $\Pi^\beta(\mu)$ is the set of all Radon probability measures $P$
on $X\times \mathcal{P}(Y)$ such that the projection $P_X$ of $P$ on $X$ is $\mu$ and
the barycenter of the projection $P_{\mathcal{P}}$ of $P$ on $\mathcal{P}(Y)$ is a given
measure $\beta\in \mathcal{P}(Y)$:
$$
\Pi^\beta(\mu):=\{P \in \mathcal{P}(X\times \mathcal{P}(Y))\colon P_X=\mu, \quad \beta_{P_{\mathcal{P}}}=\beta\}.
$$

Let us recall that a set $\Gamma\subset X\times Z$  is called strongly $h$-monotone for a function
$h$ on $X\times Z$ if there exist Borel measurable functions $\varphi \colon X \to [-\infty, \infty)$ and $\psi \colon Z \to [-\infty, \infty)$ such that $\varphi(x) + \psi(z) \le h(x, z)$ for all $x \in X, z \in Z$ and $\varphi(x) + \psi(z) = h(x, z)$ for all $(x, z) \in \Gamma$. 

It is known (see \cite{BeigGMS}) that if $X$ and $Z$ are Souslin spaces, $\mu\in \mathcal{P}(X)$,
 $\nu\in \mathcal{P}(Z)$,  and for a Borel measurable cost function $h$ there is an optimal measure
 $\sigma\in \Pi(\mu,\nu)$, then $\sigma$ is concentrated on a Borel strongly $h$-monotone set.
 We shall apply this result in the case where $Z=\mathcal{P}(Y)$ and $Y$ is Souslin. 

For a concave function $f$ on an open convex set $U \subset \mathbb R^d$ we denote by $\partial f(x_0)$ the superdifferential of the function $f$ at the point $x_0 \in U$:
$$\partial f(x) = \{v \in \mathbb R^d: f(x) - f(x_0) \le \langle x - x_0, v \rangle \quad \forall x \in U\},$$
where $\langle \cdot, \cdot \rangle$ denotes the Euclidean scalar product in $\mathbb R^d$. 

We use the following proposition about the solutions of the problem (\ref{Kfix}) proved in \cite{BPR}. 

\begin{proposition}[\cite{BPR}] \label{prop1}
{\rm(i)} Let $X, Y$ be completely regular topological spaces. Let $h$ be a lower semicontinuous function on the product $X\times \mathcal{P}(Y)$.
For all $\mu\in \mathcal{P}(X)$ and $\beta\in \mathcal{P}(Y)$ the Kantorovich problem (\ref{Kfix})
with a fixed barycenter has a solution.

{\rm(ii)} Every optimal measure $P$ for this problem is also optimal for the classical linear problem with the same cost function
and marginals $\mu$ and~$P_{\mathcal{P}}$, where $P_{\mathcal{P}}$ is the projection of $P$ on  $\mathcal{P}(Y)$.

{\rm(iii)} Finally, if $X$ and $Y$ are Souslin spaces, then $P$ is concentrated on a strongly $h$-monotone  set.
\end{proposition}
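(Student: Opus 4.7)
The strategy is to reduce (\ref{psi}) to the fixed-barycenter linear problem (\ref{Kfix}) via Proposition~\ref{prop1}, and then to exploit the special structure of $h(x,p)=\Psi(g(p)-f(x))$, the finite-dimensional range of $f$ and $g$, the regularity of $\Psi$, and the dimensional hypothesis on $\mu$ in order to show that every solution of (\ref{Kfix}) can be collapsed to a genuine conditional-measure plan in $\Pi(\mu,\nu)$ without increasing the cost.

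First I would record the lifting correspondence. Every $\sigma\in\Pi(\mu,\nu)$ with disintegration $\sigma=\sigma^x\otimes\mu$ produces $P_\sigma:=\int_X\delta_{(x,\sigma^x)}\,\mu(dx)\in\Pi^\nu(\mu)$ whose cost in (\ref{Kfix}) equals the value of (\ref{psi}) at $\sigma$; hence $\inf(\ref{psi})\ge\inf(\ref{Kfix})$. Conversely, for any $P\in\Pi^\nu(\mu)$ with disintegration $P^x\in\mathcal P(\mathcal P(Y))$, the barycenter $\sigma_P^x:=\int_{\mathcal P(Y)} p\,P^x(dp)$ defines a plan $\sigma_P:=\sigma_P^x\otimes\mu\in\Pi(\mu,\nu)$. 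Applying Proposition~\ref{prop1} to the continuous cost $h$ produces an optimal $P^{\ast}\in\Pi^\nu(\mu)$ that is concentrated on a Borel strongly $h$-monotone set $\Gamma$ with Borel functions $\varphi,\psi$ satisfying $\varphi(x)+\psi(p)\le\Psi(g(p)-f(x))$ everywhere, with equality on $\Gamma$.

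The core step is to produce a Borel map $T\colon\mathbb R^d\to\mathbb R^d$ such that $g(p)=T(f(x))$ for $P^{\ast}$-a.e.\ $(x,p)$. Push $\Gamma$ forward under $(x,p)\mapsto(f(x),g(p))$ to an analytic set $\tilde\Gamma\subset\mathbb R^d\times\mathbb R^d$; the strong $h$-monotonicity of $\Gamma$ passes to $2$-cyclical monotonicity of $\tilde\Gamma$ for the cost $\tilde h(a,c)=\Psi(c-a)$,
$$
\Psi(c_0-a_0)+\Psi(c_1-a_1)\le\Psi(c_0-a_1)+\Psi(c_1-a_0),\qquad (a_i,c_i)\in\tilde\Gamma.
$$
Following the Gangbo--McCann scheme, one builds a $c$-concave potential $\tilde\varphi$ on $\mathbb R^d$ realizing this monotonicity. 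Since $\Psi\in C^{1,1}_{\mathrm{loc}}$, the function $\tilde\varphi$ is locally semi-concave, so its set of non-differentiability has Hausdorff dimension at most $d-1$ (and is countable when $d=1$). The hypothesis $\mu(f^{-1}(A))=0$ for every $A$ of Hausdorff dimension $\le d-1$ (respectively, atomlessness of $\mu$ when $d=1$) together with the continuity of $f$ implies that $\tilde\varphi$ is differentiable at $f(x)$ for $\mu$-a.e.\ $x$. Differentiating the monotonicity inequality at such a point $a=f(x)$ with $(a,c)\in\tilde\Gamma$ yields $\nabla\tilde\varphi(a)=-\nabla\Psi(c-a)$; strict convexity of $\Psi$ forces $\nabla\Psi$ to be injective, whence $c=a+(\nabla\Psi)^{-1}(-\nabla\tilde\varphi(a))=:T(a)$, a Borel map.

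With $T$ in hand, for $\mu$-a.e.\ $x$ the conditional $P^{\ast,x}$ is concentrated on $\{p\in\mathcal P(Y):g(p)=T(f(x))\}$. Under hypothesis~(i) this set is convex, so its barycenter $\sigma_{P^{\ast}}^x$ lies in it and $g(\sigma_{P^{\ast}}^x)=T(f(x))$; then $h(x,p)=\Psi(T(f(x))-f(x))=h(x,\sigma_{P^{\ast}}^x)$ for $P^{\ast,x}$-a.e.\ $p$, so integrating gives $J^{(\ref{psi})}(\sigma_{P^{\ast}})=J^{(\ref{Kfix})}(P^{\ast})=\inf(\ref{Kfix})\le\inf(\ref{psi})$, proving $\sigma_{P^{\ast}}$ is a minimizer. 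Under hypothesis~(ii) the same set is contained in the convex set $\{p:g(p)\le T(f(x))\}$, so $g(\sigma_{P^{\ast}}^x)\le T(f(x))$; the coordinatewise monotonicity of $\Psi$ then yields $\Psi(g(\sigma_{P^{\ast}}^x)-f(x))\le\Psi(T(f(x))-f(x))$, and integration closes the argument analogously. The main obstacle is the third paragraph: constructing a Borel $c$-concave potential out of the (only analytic) set $\tilde\Gamma$ and establishing the Hausdorff-dimension bound on its non-differentiability set using only the $C^{1,1}_{\mathrm{loc}}$-strict convexity hypothesis on $\Psi$, so that the dimensional assumption on $\mu$ can be invoked.
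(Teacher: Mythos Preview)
Your proposal is not a proof of Proposition~\ref{prop1} at all; that proposition is merely quoted from \cite{BPR} and carries no proof in this paper. What you have written is a proof sketch for Theorem~\ref{th_psi}, so I compare it with the paper's proof of that theorem.

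The overall architecture agrees with the paper's: solve the fixed-barycenter problem~(\ref{Kfix}) via Proposition~\ref{prop1}, use strong $h$-monotonicity to show that $g(p)$ is $\mu$-a.e.\ determined by $f(x)$, and then invoke the level-set convexity hypothesis (i) or (ii) to collapse the optimal $P^{\ast}$ to a plan $\sigma\in\Pi(\mu,\nu)$ without increasing the cost. The main methodological difference is that the paper avoids your ``main obstacle'' entirely. Rather than pushing $\Gamma$ into $\mathbb R^d\times\mathbb R^d$, extracting cyclical monotonicity, and rebuilding a $c$-concave potential \`a la Gangbo--McCann, the paper keeps the potentials $\varphi,\psi$ already supplied by strong $h$-monotonicity and sets
\[
\chi(v)=\inf_{p\in\mathcal P(Y)}\bigl(\Psi(g(p)-v)-\psi(p)\bigr).
\]
Since $\Psi\in C^{1,1}_{\mathrm{loc}}$, one finds $\lambda>0$ so that $v\mapsto\chi(v)-\lambda|v|^2$ is concave on each ball, hence $\chi$ is differentiable off a set of Hausdorff dimension at most $d-1$; at a differentiability point the superdifferential inclusion forces $\nabla\Psi(g(p_0)-f(x_0))=\nabla\chi(f(x_0))$, and injectivity of $\nabla\Psi$ pins down $g(p_0)$. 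This is more direct and sidesteps the measurable-selection and analytic-set issues you flag.

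There is a genuine gap in your handling of $d=1$. You claim that ``atomlessness of $\mu$ \dots\ implies that $\tilde\varphi$ is differentiable at $f(x)$ for $\mu$-a.e.\ $x$'', but atomlessness of $\mu$ does \emph{not} imply $\mu(f^{-1}(c))=0$ for single points $c$; for instance $f$ may be constant on a set of positive $\mu$-measure. The paper treats this case separately: the non-differentiability set $V\setminus V'=\{c_j\}$ is countable, and on each fiber $X_j=f^{-1}(c_j)$ with $\mu(X_j)>0$ one uses atomlessness of $\mu|_{X_j}$ (not of $\mu\circ f^{-1}$) to produce a Borel map $T_j\colon X_j\to\mathcal P(Y)$ with $\mu|_{X_j}\circ T_j^{-1}=\pi_j$, where $\pi_j$ is the $\mathcal P(Y)$-projection of $P|_{X_j\times\mathcal P(Y)}$, and sets $\sigma^x=T_j(x)$ there. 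This recovers both the barycenter identity $\int_{X_j}\sigma^x\,\mu(dx)=\int p\,\pi_j(dp)$ and the cost equality on those fibers. Your argument omits this repair and is incomplete for $d=1$.
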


\begin{proof}[Proof of Theorem \ref{th_psi}] 
Let  us consider the Kantorovich problem (\ref{Kfix})  on $X \times \mathcal{P}(Y)$ with a fixed barycenter equal to $\nu$.
According to Proposition~\ref{prop1}, there is a point of minimum
$P \in \Pi^\nu(\mu)\subset \mathcal P(X \times \mathcal P(Y))$.
Moreover, the measure $P$ is concentrated on a Borel strongly $h$-monotone set $\Gamma$.
Let $\varphi \colon X \to [-\infty, \infty)$ and $\psi \colon \mathcal P(Y) \to [-\infty, \infty)$ be Borel measurable functions such that $\varphi(x) + \psi(p) \le h(x, p)$ for all $x \in X$, $p \in \mathcal P(Y)$, and
$\varphi(x) + \psi(p) = h(x, p)$ for all $(x, p) \in \Gamma$.  
Let $(x_0, p_0) \in \Gamma$. Then 
$$
\varphi(x_0) + \psi(p_0) = \Psi(g(p_0) - f(x_0))
$$ 
and
$$
\varphi(x) \le \inf_{p \in \mathcal P(Y)} (h(x, p) - \psi(p)) \quad \forall x \in X,
$$
that is, $\varphi(x) \le \chi(f(x))$ for all $x \in X$, where
$$
\chi(v)  = \inf_{p \in \mathcal P(Y)} (\Psi(g(p) - v) - \psi(p)) \quad \forall v \in \mathbb R^d.
$$ 
Let $V = \{v \in \mathbb R^d: \chi(v) \neq -\infty\}$. Note that $f(x) \in V$ for every $x \in \Gamma_X$, where $\Gamma_X = \{x \in X: \exists p \in \mathcal P(Y), (x, p) \in \Gamma\}$ is the projection of $\Gamma$ on $X$. We have $\mu(\Gamma_X) = 1$, since $P(\Gamma) = 1$, and hence $\mu(f^{-1}(V)) = 1$. 

Denote $v_0 = f(x_0)$. Fix some $\delta > 0$. Denote by $U(v_0, \delta)$ the open ball with center $v_0$ and radius $\delta$.  
For some $R > 0$ we have $|g(p)| \le R$ for all $p \in \mathcal P(Y)$. 
Since $\Psi \in C^{1, 1}_{loc}(\mathbb R^d)$, there exists $\lambda > 0$ such that $\Psi(v) - \lambda |v|^2$ is a concave function in the ball $U(0, |v_0| + R + \delta)$. Then the function
$v \mapsto \Psi(g(p) - v) - \lambda |v|^2$ is concave in the ball $U(v_0, \delta)$ for every $p \in \mathcal P(Y)$. Thus the function
$$\chi(v) - \lambda |v|^2 = \inf_{p \in \mathcal P(Y)} (\Psi(g(p) - v) - \lambda |v|^2 - \psi(p))$$ 
is concave in the ball $U(v_0, \delta)$ (as infimum of concave functions).
We have 
$$
\chi(f(x_0)) + \psi(p_0) = \Psi(g(p_0) - f(x_0))
$$ 
and 
$$
\chi(v) + \psi(p_0) \le \Psi(g(p_0) - v) \quad \forall v \in \mathbb R^d.
$$ 
Therefore, 
$$
\chi(v) - \chi(v_0) \le \Psi(g(p_0) - v) - \Psi(g(p_0) - v_0) \quad \forall v \in U(v_0, \delta). 
$$
This implies the inclusion
$$
\partial (\Psi(g(p_0) - v) - \lambda |v|^2)|_{v = v_0} \subset \partial (\chi(v) - \lambda |v|^2)|_{v = v_0}. 
$$
for superdifferentials of concave functions. 
Since the function $\chi(v) - \lambda |v|^2$ is concave in the ball $U(v_0, \delta)$, 
the set 
$$\{v \in U(v_0, \delta): \chi(v) \neq -\infty\} = U(v_0, \delta) \cap V$$ is convex. 
Let $V(v_0)$ be the interior of the set $U(v_0, \delta) \cap V$, then $(U(v_0, \delta) \cap V) \setminus V(v_0)$ has Hausdorff dimension at most $d - 1$. 
It is known that a concave function on an opex convex subset of $\mathbb R^d$ is differentiable out of a set with Hausdorff dimension at most $d - 1$. 
Let $V' \subset V$ be the set of differentiability points of the function $\chi$, then $V \setminus V'$ has Hausdorff dimension at most $d - 1$. Assume that $f(x_0) \in V'$. Then we obtain that 
$$\nabla \Psi(g(p_0) - f(x_0)) = \nabla \chi(f(x_0)).$$ 
Since the function $\Psi$ is strictly convex, the mapping 
$z \mapsto \nabla \Psi(z)$ is injective. This implies that the value $g(p_0)$ is uniquely determined, i.e. 
$g(p_1) = g(p_2)$ for all $p_1, p_2 \in \mathcal P(Y)$ such that $(x_0, p_1), (x_0, p_2) \in \Gamma$. 

Consider separately two cases: $d > 1$ and $d = 1$. 
First, let $d > 1$. Set $X_0 = f^{-1}(V')$. 
Then $\mu(X \setminus X_0) = \mu(f^{-1}(V \setminus V')) = 0$ because the set $V \setminus V'$ has Hausdorff dimension at most $d - 1$.
Set 
$$
\sigma^x = \int_{\mathcal P(Y)} p P^x(dp), \, x \in X,
$$ 
and $\sigma(dx\, dy) = \sigma^x(dy) \mu(dx)$. 
Then 
$$
\int_X \sigma^x\, \mu(dx) = \int_{\mathcal{P}(Y)} p \, P(dx\, dp) = \nu,
$$
i.e. $\sigma \in \Pi(\mu, \nu)$.
It follows from the above that there exists a function $G \colon X \to \mathbb R^d$ such that
$$P^x(p \in \mathcal P(Y): g(p) = G(x)) = 1 \quad \mbox{for $\mu$-a.e. $x$}.$$ 

In the case \rm{(i)} we have $g(\sigma^x) = G(x)$ for $\mu$-a.e. $x$, since the set $\{p \in \mathcal P(Y): g(p) = G(x)\}$ is convex and closed. 
Therefore, 
\begin{multline*}
\int_X \Psi(g(\sigma^x) - f(x)) \, \mu(dx) = \int_X \Psi(G(x) - f(x)) \, \mu(dx) = \\
= \int_X \int_{\mathcal P(Y)} \Psi(g(p) - f(x)) P^x(dp) \, \mu(dx) = \int_{X \times \mathcal P(Y)} \Psi(g(p) - f(x)) \, P(dx dp). 
\end{multline*}
In the case \rm{(ii)} we have $g(\sigma^x) \le G(x)$ for $\mu$-a.e. $x$, since the set $\{p \in \mathcal P(Y): g(p) \le G(x)\}$ is convex and closed. Hence $\Psi(g(\sigma^x) - f(x)) \le \Psi(G(x) - f(x))$, since $\Psi$ is increasing. 
Therefore,
\begin{multline*}
\int_X \Psi(g(\sigma^x) - f(x)) \, \mu(dx) \le \int_X \Psi(G(x) - f(x)) \, \mu(dx) = \\ =
\int_{X \times \mathcal P(Y)} \Psi(g(p) - f(x)) \, P(dx dp).
\end{multline*}

Thus in both cases \rm{(i)} and \rm{(ii)} we obtain that the measure $\sigma$ delivers a minimum in the problem (\ref{psi}). 

Let $d = 1$. Then the set $V \setminus V'$ is at most countable. Let $V \setminus V' = \{c_j\}$.
Let $X_j = f^{-1}(c_j)$ and let  $\pi_j$ be the projection on $\mathcal{P}(Y)$ of the restriction of the measure
$P$ to the set $X_j \times \mathcal{P}(Y)$.
We take into account only $j$ with $\mu(X_j)>0$.
For every such $j$, since $\mu$ has no atoms,  there is a Borel mapping
$$
T_j \colon X_j \to \mathcal{P}(Y)
$$
such that
$$
\mu|_{X_j} \circ T_j^{-1} = \pi_j,
$$
see, e.g., \cite[Theorem~9.1.5]{B07}.
We now set
$$
\sigma^x = T_j(x) \quad \hbox{whenever } \ x \in X_j.
$$
Then
$$
\int_{X_j} \sigma^x\, \mu(dx) = \int_{\mathcal{P}(Y)} p \, \pi_j(dp), 
$$
\begin{multline*}
\int_{X_j} \Psi(g(\sigma^x) - c_j)\, \mu(dx) = \int_{\mathcal{P}(Y)} \Psi(g(p) - c_j) \, \pi_j(dp) =  
\int_{X_j \times Y} \Psi(g(p) - f(x)) P(dx dp). 
\end{multline*}
Let $X_0 = f^{-1}(V')$. Set 
$$\sigma^x = \int_{\mathcal P(Y)} p P^x(dp), \ x \in X_0.$$
Then similarly as in the case $d > 1$ we obtain
$$
\int_{X_0} \Psi(g(\sigma^x) - f(x)) \mu(dx) \le \int_{X_0 \times \mathcal P(Y)} \Psi(g(p) - f(x)) P(dx dp).
$$
Therefore,
$$
\int_X \sigma^x\, \mu(dx) = \int_{\mathcal{P}(Y)} p \, P(dx\, dp) = \nu,
$$
$$
\int_X \Psi(g(\sigma^x) - f(x)) \, \mu(dx) \le \int_{X\times {\mathcal{P}(Y)}} \Psi(g(p) - f(x)) \, P(dx\, dp).
$$ 

\end{proof}

The class of cost functions covered by Theorem \ref{th_psi} contains the so called barycentric cost functions of the form
$h(x, p) = \Psi(b(p) - x)$, where $X = Y = \mathbb R^d$, $\Psi \colon \mathbb R^d \to \mathbb R$ is a strictly convex function 
and $b(p)$ denotes the barycenter of a measure $p \in \mathcal P(\mathbb R^d)$. In this case the existence of a minimum in the  nonlinear problem (\ref{th_psi}) follows from Theorem \ref{xp}, since the function $p \mapsto h(x, p)$ is convex in $p$. However, in the statement of Theorem \ref{th_psi} the function $h(x, p) = \Psi(g(p) - f(x))$ need not be convex in $p$ if $g$ is not linear. Consider the following example. 

\begin{example}
\rm
The function 
$$g(p) = G\Bigl(\int_Y \varphi_1(y) p(dy), \dots, \int_Y \varphi_k(y) p(dy)\Bigr),$$ 
where $\varphi_1, \dots, \varphi_k \in C_b(Y)$ and $G \colon \mathbb R^k \to \mathbb R^d$ is an injective continuous function, satisfies the condition that
the sets $\{p \in \mathcal P(Y): g(p) = c\}$ are convex for all $c \in \mathbb R^d$. 
This follows by convexity of the sets 
$$\Bigl\{p \in \mathcal P(Y): \int_Y \varphi_1(y) p(dy) = a_1, \dots, \int_Y \varphi_k(y) p(dy) = a_k\Bigr\}$$ 
for all $a_1, \dots, a_k \in \mathbb R$.  
\end{example}

\begin{example}
\rm
The function 
$$g(p) = \Bigl(\Bigl(\int_Y \varphi_1(y) p(dy)\Bigr)^{k_1}, \dots, \Bigl(\int_Y \varphi_d(y) p(dy)\Bigr)^{k_d}\Bigr),$$ 
where $\varphi_1, \dots, \varphi_d \in C_b(Y)$ and $k_1, \dots, k_d \in \mathbb N$, satisfies the condition that
the sets $\{p \in \mathcal P(Y): g(p) \le c\}$ are convex for all $c \in \mathbb R^d$. 
\end{example}

\section{Kantorovich problem with the cost function $h(x, g(\sigma^x))$}

Consider the nonlinear Kantorovich problem 
\begin{equation}\label{gp}
\int_X h(x, g(\sigma^x)) \mu(dx) \to \inf, \quad \sigma \in \Pi(\mu, \nu), \ \sigma(dx dy) = \sigma^x(dy) \mu(dx),
\end{equation}
for the cost function of the form $h(x, g(p))$, where $g \colon \mathcal P(Y) \to \mathbb R^d$ and
$h \colon X \times \mathbb R^d \to \mathbb R$ are Borel measurable functions.
In the general case the minimization problem (\ref{gp}) turns out to be rather complicated and may not attain a minimum. Indeed, even for the product-type cost function $h(x, g(p)) = f(x)g(p)$, where $f \colon X \to \mathbb R$ and $g \colon \mathcal P(Y) \to \mathbb R$ are bounded continuous functions, the nonlinear problem (\ref{gp}) may not have a minimizer, as we have shown in \cite{BPR}. 
So we need some additional assumptions on the function $g$. 

In \cite{G}, \cite{GJ}, \cite{BaB}, \cite{BaB2}, \cite{BaP} the so called barycentric cost functions have been considered, i.e. the functions of the form $h(x, b(p))$, where
$Y = \mathbb R^d$ and $b(p)$ is the barycenter of a measure $p \in \mathcal P(\mathbb R^d)$. We generalize the notion of barycentric cost functions and consider cost functions of the form $h(x, g(p))$, where $g \colon \mathcal P(Y) \to \mathbb R^d$ is a finite-dimensional linear operator, $g(p) = \int_Y F(y) p(dy)$ for some Borel measurable function $F \colon Y \to \mathbb R^d$.  

Recall the definition of convex dominance for probability measures on $\mathbb R^d$. Let $\mu, \nu \in \mathcal P(\mathbb R^d)$. 
We say that the measure $\mu$ is dominated by $\nu$ in the convex order (and denote by $\mu \preceq_c \nu$) if both measures $\mu$ and $\nu$ have finite first moments and $\int_{\mathbb R^d} \varphi d\mu \le \int_{\mathbb R^d} \varphi d\nu$ for any convex function $\varphi \colon \mathbb R^d \to \mathbb R$ integrable with respect to the measure $\nu$.

\begin{theorem}\label{th_conv1}
Let $X$ and $Y$ be Souslin spaces, $\mu \in \mathcal P(X)$, $\nu \in \mathcal P(Y)$. Consider the nonlinear problem (\ref{gp}), where $g(p) = \int_Y F(y) p(dy)$, $F \colon Y \to \mathbb R^d$ is a Borel measurable function, integrable with respect to the measure $\nu$ and $h \colon X \times \mathbb R^d \to \mathbb R$ is a Borel measurable function. Then

(i) the infimum in the nonlinear Kantorovich problem (\ref{gp}) equals the infimum in the following Monge problem with convex dominance 
 \begin{equation} \label{conv1}
\inf_{\sigma \in \Pi(\mu, \nu)} \int_X h(x, g(\sigma^x)) \mu(dx) = \inf_{\scriptstyle T \colon X \to \mathbb R^d, \atop \scriptstyle \mu \circ T^{-1} \preceq_c \, \nu \circ F^{-1}} 
\int_X h(x, Tx) \mu(dx),
\end{equation}

(ii) there exists a minimum in the problem (\ref{gp}) if and only if there exists a minimum in the problem (\ref{conv1}).

\end{theorem}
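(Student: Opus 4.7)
The plan is to prove (i) by explicitly matching the two infima through value-preserving constructions, and to derive (ii) immediately from these constructions.

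For the inequality $\inf\,(\ref{gp})\ge\inf\,(\ref{conv1})$, I would associate to each $\sigma\in\Pi(\mu,\nu)$ the Borel map $T_\sigma(x):=g(\sigma^x)=\int_Y F(y)\,\sigma^x(dy)$, which is well-defined $\mu$-a.e.\ by $\nu$-integrability of $F$ and Fubini. Jensen's inequality applied to each $\sigma^x$ gives, for every convex $\varphi$ integrable against $\nu\circ F^{-1}$,
\[
\int_X \varphi(T_\sigma(x))\,\mu(dx)\le \int_X\int_Y \varphi(F(y))\,\sigma^x(dy)\,\mu(dx)=\int_Y \varphi\circ F\,d\nu,
\]
so $\mu\circ T_\sigma^{-1}\preceq_c \nu\circ F^{-1}$. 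Since $h(x,g(\sigma^x))=h(x,T_\sigma(x))$, the two costs agree and $T_\sigma$ is a competitor for (\ref{conv1}).

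For the reverse inequality I would invoke Strassen's theorem: given a Borel $T\colon X\to\mathbb R^d$ with $\alpha:=\mu\circ T^{-1}\preceq_c \beta:=\nu\circ F^{-1}$ (both having finite first moments since $F\in L^1(\nu)$), there exists a martingale coupling $\pi\in\Pi(\alpha,\beta)$ whose disintegration $\pi(du\,dv)=\pi^u(dv)\,\alpha(du)$ satisfies $\int_{\mathbb R^d} v\,\pi^u(dv)=u$ for $\alpha$-a.e.\ $u$. Using that $Y$ is Souslin and $F$ is Borel, I would disintegrate $\nu$ along $F$ to obtain Borel probability kernels $\nu^v$ on $Y$ with $\nu^v(F^{-1}(v))=1$ and $\nu=\int \nu^v\,\beta(dv)$. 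Then I would set
\[
\sigma^x:=\int_{\mathbb R^d}\nu^v\,\pi^{T(x)}(dv),\qquad \sigma(dx\,dy):=\sigma^x(dy)\,\mu(dx).
\]
The $X$-marginal of $\sigma$ is $\mu$ by construction; the $Y$-marginal equals $\int_{\mathbb R^d}\nu^v\,\beta(dv)=\nu$ after integrating out $\alpha$ against $\pi^u$ to recover $\beta$, so $\sigma\in\Pi(\mu,\nu)$. Since $\nu^v$ sits on $F^{-1}(v)$, one gets $g(\nu^v)=v$, hence
\[
g(\sigma^x)=\int_{\mathbb R^d} g(\nu^v)\,\pi^{T(x)}(dv)=\int_{\mathbb R^d} v\,\pi^{T(x)}(dv)=T(x),
\]
so the cost of $\sigma$ in (\ref{gp}) coincides with that of $T$ in (\ref{conv1}). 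This finishes (i), and (ii) follows: a minimizer $\sigma^*$ of (\ref{gp}) yields $T_{\sigma^*}$ feasible and optimal for (\ref{conv1}), while a minimizer $T^*$ of (\ref{conv1}) yields a $\sigma$ constructed as above that is feasible and optimal for (\ref{gp}).

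The main obstacle is the measurability bookkeeping in the reverse direction: one needs a Borel-measurable version of the Strassen kernel $u\mapsto\pi^u$ (available in $\mathbb R^d$), a Borel-measurable disintegration $v\mapsto\nu^v$ (available because $Y$ is Souslin and $F$ is Borel), and then one must verify that $x\mapsto\sigma^x(B)$ is Borel for every Borel $B\subset Y$ so that $\sigma$ is a genuine Radon measure in $\Pi(\mu,\nu)$ — all routine given the Souslin hypotheses, but the checks are the real content of the argument. The Jensen step in the first direction is straightforward.
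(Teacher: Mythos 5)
Your proof is correct, and the forward direction (pushing $\sigma$ to $T_\sigma(x)=g(\sigma^x)$ and checking $\mu\circ T_\sigma^{-1}\preceq_c\nu\circ F^{-1}$ via Jensen) is exactly the paper's argument. The reverse direction, however, takes a genuinely different route to the key coupling. The paper proves a standalone lemma (Proposition~\ref{prop_conv1}): it couples $\zeta=\mu\circ T^{-1}$ \emph{directly} with $\nu$ on $\mathbb R^d\times Y$ by applying Strassen's abstract marginal theorem (Theorem~\ref{th_strassen}) to the closed convex set $\Lambda$ of measures for which $(u,F(y))$ is a martingale, verifying the dual condition by hand with the concave envelopes $f_0,f_1$, and then handling merely Borel $F$ by an $L^1$-approximation $F_n\to F$ with a weak-limit argument. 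You instead factor this coupling: invoke the classical finite-dimensional Strassen martingale-coupling theorem between the two pushforwards $\mu\circ T^{-1}\preceq_c\nu\circ F^{-1}$ on $\mathbb R^d$, disintegrate $\nu$ along $F$ into kernels $\nu^v$ concentrated on the fibers $F^{-1}(v)$ (available since $Y$ is Souslin and $F$ Borel), and compose the two kernels to get $\sigma^x=\int\nu^v\,\pi^{T(x)}(dv)$. This composition does produce a measure in $\Pi(\zeta,\nu)$ with $\int_Y F\,d(\int\nu^v\pi^u(dv))=u$, so it yields Proposition~\ref{prop_conv1} as a corollary of the standard $\mathbb R^d$ result; it is shorter, avoids the approximation step for Borel $F$ entirely, and isolates the measurability work in two standard disintegrations. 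What the paper's self-contained argument buys is that it generalizes verbatim to the infinite-dimensional setting of Proposition~\ref{prop_conv2} (where a ready-made ``classical Strassen martingale theorem'' on a separable Banach space is not available off the shelf), whereas your factorization is tied to having the martingale-coupling theorem on the target space of $F$. As you note, the remaining content is routine verification that $u\mapsto\pi^u$, $v\mapsto\nu^v$ and $x\mapsto\sigma^x(B)$ are measurable, which holds under the Souslin hypotheses.
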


For the proof of Theorem \ref{th_conv1} we exploit the following proposition which generalizes Strassen's theorem on existence of a martingale coupling. 

\begin{proposition}\label{prop_conv1} 
Let $Y$ be a Souslin space and let $F \colon Y \to \mathbb R^d$ be a Borel measurable function. 
Suppose that $\zeta \in \mathcal P(\mathbb R^d)$, $\nu \in \mathcal P(Y)$ and $\zeta \preceq_c \nu \circ F^{-1}$. Then there exists a measure $\pi \in \Pi(\zeta, \nu)$ such that
for conditional measures $\pi^u$ of the measure $\pi$ with respect to $\zeta$ we have 
$$\int_Y F(y) \pi^u(dy) = u \quad \mbox{for  $\zeta$-a.e. $u$}.$$
\end{proposition}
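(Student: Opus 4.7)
The plan is to reduce the statement to Strassen's classical martingale-coupling theorem on $\mathbb{R}^d$ combined with a disintegration of $\nu$ along the level sets of $F$. First I set $\rho:=\nu\circ F^{-1}\in\mathcal P(\mathbb R^d)$, which has finite first moment by the integrability hypothesis on $F$ and which dominates $\zeta$ in the convex order by assumption. By Strassen's theorem on $\mathbb R^d$ there is a martingale coupling $\tilde\pi\in\Pi(\zeta,\rho)$ whose conditional measures $\tilde\pi^u$ with respect to $\zeta$ satisfy $\int_{\mathbb R^d} v\,\tilde\pi^u(dv)=u$ for $\zeta$-a.e. $u$.

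Second, since $Y$ is Souslin, $\nu$ is Radon and $F$ is Borel, standard disintegration (see, e.g., \cite[Theorem~10.6.5]{B07}) yields probability measures $\{\nu_v\}_{v\in\mathbb R^d}$ on $Y$, Borel in $v$, with $\nu_v$ concentrated on $F^{-1}(v)$ for $\rho$-a.e. $v$, and
$$\nu(dy)=\nu_v(dy)\,\rho(dv).$$
The support condition immediately gives $\int_Y F(y)\,\nu_v(dy)=v$ for $\rho$-a.e. $v$.

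Third, I would glue the two ingredients by setting
$$\pi^u(dy):=\int_{\mathbb R^d}\nu_v(dy)\,\tilde\pi^u(dv),\qquad \pi(du\,dy):=\pi^u(dy)\,\zeta(du).$$
A short Fubini computation shows that the first marginal of $\pi$ is $\zeta$ while the second marginal is
$$\int_{\mathbb R^d}\nu_v(dy)\,\rho(dv)=\nu(dy),$$
so $\pi\in\Pi(\zeta,\nu)$. The desired barycentric property is then immediate: concentration of $\nu_v$ on $F^{-1}(v)$ yields
$$\int_Y F(y)\,\pi^u(dy)=\int_{\mathbb R^d} v\,\tilde\pi^u(dv)=u\qquad\text{for }\zeta\text{-a.e. }u.$$

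The main obstacle I anticipate is the Souslin-space bookkeeping in the disintegration step: one must check joint Borel measurability $v\mapsto \nu_v(B)$ so that $\pi$ is a well-defined Radon probability measure on $\mathbb R^d\times Y$, and verify that the $\pi^u$ defined above are genuinely the conditional measures of $\pi$ relative to its first marginal in the sense employed throughout the paper. The Souslin hypothesis on $Y$ (which, as the introduction notes, makes $\mathcal P(Y)$ Souslin and forces all Borel measures on it to be Radon) together with Borel measurability of $F$ is precisely what makes these steps go through; integrability of $F$ with respect to $\nu$ is needed both for the barycenter integrals to converge and for the hypothesis $\zeta\preceq_c\rho$ to be meaningful.
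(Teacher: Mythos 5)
Your proof is correct, but it takes a genuinely different route from the one in the paper. You factor the problem through the image measure $\rho=\nu\circ F^{-1}$: you invoke the \emph{classical} Strassen martingale-coupling theorem on $\mathbb R^d$ to get $\tilde\pi\in\Pi(\zeta,\rho)$ with $\int v\,\tilde\pi^u(dv)=u$, disintegrate $\nu$ over the fibres of $F$ as $\nu(dy)=\nu_v(dy)\,\rho(dv)$ with $\nu_v$ concentrated on $F^{-1}(v)$, and glue via $\pi^u=\int\nu_v\,\tilde\pi^u(dv)$. The paper instead applies the more abstract marginal theorem of Strassen (Theorem \ref{th_strassen}) directly to the pair $(\mathbb R^d, Y)$ with $\Lambda$ the set of couplings for which $(u,F(y))$ is a martingale: it verifies the dual inequality by hand through the concave function $f_1(u)=\sup\{\int\psi\,dp:\int F\,dp=u\}$, first for continuous $F$ on a complete separable metric space, and then removes the continuity assumption by an $L^1(\nu)$-approximation of $F$ together with weak compactness of $\Pi(\zeta,\nu)$. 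Your reduction is shorter and cleaner where it applies: the disintegration step is indifferent to whether $F$ is continuous or merely Borel, so you avoid both the verification of the dual condition and the approximation/limit argument. What the paper's direct argument buys is uniformity of method: essentially the same proof is reused verbatim for the infinite-dimensional analogue (Proposition \ref{prop_conv2}), where the classical $\mathbb R^d$ martingale-coupling theorem you rely on is no longer available off the shelf. The measurability issues you flag (a Borel version of $v\mapsto\nu_v$, and the fact that modifying it on a $\rho$-null set does not affect the integrals against $\tilde\pi^u$ for $\zeta$-a.e.\ $u$, since $\int\tilde\pi^u\,\zeta(du)=\rho$) are real but standard for Souslin $Y$, and the integrability of $|F|$ needed for the Fubini computations is guaranteed by the finiteness of the first moment of $\nu\circ F^{-1}$ built into the definition of $\preceq_c$; so there is no gap.
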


We prove Proposition \ref{prop_conv1} using the following theorem proved by Strassen in \cite{Strassen}. 

Let $S, T$ be complete separable metric spaces. Let $\hat \varphi$ and $\hat \psi$ be positive (not necessarily bounded from above) continuous functions on $S$ and $T$ respectively, bounded away from $0$. 
Set $$\hat \chi(s, t) =  \hat \varphi(s) + \hat \psi(t), \quad s \in S, t \in T. $$  
Let $\mathcal P_{\hat \chi}(S \times T)$ be the set of all Borel probability measures $\pi \in \mathcal P(S \times T)$ such that $\hat \chi(s, t)$ is $\pi$-integrable, endowed with the topology $\mathcal J$ generated by the functionals $\pi \mapsto \int \chi d\pi$ for all continuous functions
$\chi \colon S \times T \to \mathbb R$ with 
$$\sup\{|\chi(s, t)|/\hat \chi(s, t): s \in S, t \in T\} < \infty.$$

\begin{theorem}[\cite{Strassen}] \label{th_strassen}
Let $S, T$ be complete separable metric spaces. 
Let $\mu \in \mathcal P(S)$ and $\nu \in \mathcal P(T)$ be such that $\hat \varphi$ is $\mu$-integrable and $\hat \psi$ is $\nu$-integrable. Let $\Lambda$ be a $\mathcal J$-closed convex subset of $\mathcal P_{\hat \chi}(S \times T)$. 
The set $\Pi(\mu, \nu) \cap \Lambda$ is non-empty if and only if
$$
\int_S \varphi(s) \mu(ds) + \int_T \psi(t) \nu(dt) \le \sup\Bigl\{\int_{S \times T} (\varphi(s) + \psi(t)) \, \pi(ds dt): \pi \in \Lambda \Bigr\}
$$ 
for all continuous functions $\varphi \colon S \to \mathbb R$ with $\sup\{|\varphi(s)|/\hat \varphi(s): s \in S\} < \infty$ and 
continuous functions $\psi \colon T \to \mathbb R$ with
$\sup\{|\psi(t)|/\hat \psi(t): t \in T\} < \infty$. 
\end{theorem}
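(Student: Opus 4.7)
The plan follows the classical Hahn--Banach/LP-duality route for Strassen-type theorems, adapted to the weighted setting. Necessity is immediate: if $\pi\in\Pi(\mu,\nu)\cap\Lambda$, then $\int(\varphi+\psi)\,d\pi=\int\varphi\,d\mu+\int\psi\,d\nu\le\sup_{\pi'\in\Lambda}\int(\varphi+\psi)\,d\pi'$ for every admissible pair $(\varphi,\psi)$, so the stated inequality holds trivially.

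For sufficiency I would argue by contradiction. Work in the weighted Banach space $E:=C_{\hat\chi}(S\times T)$ of continuous $\chi$ with $\|\chi\|_{\hat\chi}:=\sup|\chi|/\hat\chi<\infty$; the $\mathcal J$-topology on $\mathcal P_{\hat\chi}$ is the weak-$*$ topology inherited from this predual. The first ingredient is a weighted Kantorovich duality
$$
I(\chi):=\inf_{\tau\in\Pi(\mu,\nu)}\int\chi\,d\tau=\sup\Bigl\{\int\varphi\,d\mu+\int\psi\,d\nu:\varphi\oplus\psi\le\chi\Bigr\},
$$
the supremum taken over admissible $(\varphi,\psi)$; this extension of the classical duality follows by truncating $\chi$ between $\pm n\hat\chi$ and passing to the limit using $\hat\varphi\in L^1(\mu)$, $\hat\psi\in L^1(\nu)$.

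The second ingredient is $\mathcal J$-compactness of $\Pi(\mu,\nu)$: tightness comes from marginal Prokhorov (here Polishness of $S,T$ is used essentially), and uniform $\hat\chi$-integrability is automatic because $\int\hat\chi\,d\pi=\int\hat\varphi\,d\mu+\int\hat\psi\,d\nu$ is constant on $\Pi(\mu,\nu)$. Assuming $\Pi(\mu,\nu)\cap\Lambda=\emptyset$, Hahn--Banach strict separation in $(\mathcal P_{\hat\chi},\mathcal J)$ of the $\mathcal J$-compact convex $\Pi(\mu,\nu)$ from the $\mathcal J$-closed convex $\Lambda$ yields $\chi_0\in E$ and $c\in\mathbb R$ with $\sup_{\pi\in\Lambda}\int\chi_0\,d\pi<c<I(\chi_0)$. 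The weighted duality then produces admissible $(\varphi_0,\psi_0)$ with $\varphi_0\oplus\psi_0\le\chi_0$ and $\int\varphi_0\,d\mu+\int\psi_0\,d\nu>c$, so
$$
\int\varphi_0\,d\mu+\int\psi_0\,d\nu>c>\sup_{\pi\in\Lambda}\int\chi_0\,d\pi\ge\sup_{\pi\in\Lambda}\int(\varphi_0+\psi_0)\,d\pi,
$$
contradicting the hypothesis and completing the proof.

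The main obstacle I expect is the weighted Kantorovich duality: the classical Kantorovich theorem treats bounded or nonnegative lower semicontinuous costs, and its extension to the weighted class $C_{\hat\chi}$ requires a careful truncation together with a dominated-convergence argument that leans on both the uniform moment bound on $\Pi(\mu,\nu)$ and the Polish/Radon structure of $S,T$. Once the weighted duality and the $\mathcal J$-compactness of $\Pi(\mu,\nu)$ are in hand, the Hahn--Banach separation step is routine.
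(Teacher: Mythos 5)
The paper offers no proof of this statement: it is quoted verbatim from Strassen's 1965 article \cite{Strassen} and used as a black box, so the only meaningful comparison is with Strassen's own argument. Your outline is a correct alternative route, but not the one Strassen takes, and the difference is precisely where the technical weight falls. You separate in the space of joint measures: $\Pi(\mu,\nu)$ is $\mathcal J$-compact convex, $\Lambda$ is $\mathcal J$-closed convex, Hahn--Banach produces a separating $\chi_0\in C_{\hat\chi}(S\times T)$, and you must then invoke a weighted Kantorovich duality to trade $\chi_0$ for an admissible pair $(\varphi_0,\psi_0)$ with $\varphi_0\oplus\psi_0\le\chi_0$; you rightly flag that duality as the main obstacle, and it is a genuine extra lemma requiring the truncation and uniform-integrability work you describe. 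Strassen instead pushes $\Lambda$ forward under the marginal map $\pi\mapsto(\pi_S,\pi_T)$ into the product of weighted measure spaces over $S$ and $T$ and separates the point $(\mu,\nu)$ from the convex, closed image; the continuous linear functionals there are exactly the pairs $(\varphi,\psi)$ appearing in the statement, so no transport duality is needed at all, and the only real work is the closedness of the image, which rests on the same $\mathcal J$-compactness you use. Thus your approach is workable but costs an extra nontrivial ingredient that Strassen's avoids. Two details of yours would need tightening: constancy of $\int\hat\chi\,d\pi$ on $\Pi(\mu,\nu)$ does not by itself yield uniform integrability --- you must use the splitting $\hat\chi=\hat\varphi\oplus\hat\psi$ together with the fixed marginals to bound $\int_{\{\hat\chi>R\}}\hat\chi\,d\pi$ uniformly; and the separating functional is represented by an element of $C_{\hat\chi}$ only because $\mathcal J$ is the weak topology of the dual pairing with $C_{\hat\chi}$, which is separating since $C_b\subset C_{\hat\chi}$ (here one uses that $\hat\varphi,\hat\psi$ are bounded away from $0$).
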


\begin{proof}[Proof of Proposition \ref{prop_conv1}]
We first consider the case where $Y$ is a complete separable metric space and $F$ is continuous. 
Denote by $\mathcal P_1(Y) \subset \mathcal P(Y)$ the set of all measures $\tilde \nu \in \mathcal P(Y)$ such that $|F|$ is $\tilde \nu$-integrable, $\mathcal P_1(\mathbb R^d) \subset \mathcal P(\mathbb R^d)$ --- the set of all measures 
$\tilde \zeta \in \mathcal P(\mathbb R^d)$ with a finite first moment, 
$\mathcal P_1(\mathbb R^d \times Y) \subset \mathcal P(\mathbb R^d \times Y)$ --- the set of all measures $\pi \in \mathcal P(\mathbb R^d \times Y)$ such that the function $|u| + |F(y)|$ is $\pi$-integrable. 
Endow $\mathcal P_1(\mathbb R^d \times Y)$ with the topology $\mathcal J$ generated by the functionals $\pi \mapsto \int \chi d\pi$ for all continuous functions $\chi \colon \mathbb R^d \times Y \to \mathbb R$ with 
$$\sup\{|\chi(u, y)|/(1 + |u| + |F(y)|): u \in \mathbb R^d, y \in Y\} < \infty.$$

Let 
$$
\Lambda = \Bigl\{\pi \in \mathcal P_1(\mathbb R^d \times Y): \int_Y F(y) \pi(du dy) = u \int_Y \pi(du dy)\Bigr\},
$$ 
i.e. $\Lambda$ is the set of all measures $\pi \in \mathcal P_1(\mathbb R^d \times Y)$ such that $(u, F(y))$ forms a martingale with respect to $\pi$. 
The set $\Lambda$ is convex and closed in the topology $\mathcal J$. Apply Theorem \ref{th_strassen} to prove that the set 
$\Pi(\zeta, \nu) \cap \Lambda$ is non-empty. 

Let $\varphi \colon \mathbb R^d \to \mathbb R$, $\psi \colon Y \to \mathbb R$ be continuous functions such that   
$\sup \{|\varphi(u)|/(1 + |u|):  u \in \mathbb R^d \} < \infty$ and $\sup\{|\psi(y)|/(1+|F(y)|): y \in Y\} < \infty$. 
Let us show that
$$
\int_{\mathbb R^d} \varphi(u) \zeta(du) + \int_Y \psi(y) \nu(dy) \le \sup\Bigl\{\int_{\mathbb R^d \times Y} (\varphi(u) + \psi(y)) \, \pi(du dy): \pi \in \Lambda\Bigr\}.
$$
Let $V$ be the set of all concave functions $f \colon \mathbb R^d \to \mathbb R$ such that $f(F(y)) \ge \psi(y)$ for all $y \in Y$. Set  $f_0(u) = \inf_{f \in V} f(u)$. Then the function $f_0$ is concave and  
\begin{multline*}
\int_{\mathbb R^d} \varphi(u) \zeta(du) + \int_Y \psi(y) \nu(dy) \le \int_{\mathbb R^d} \varphi(u) \zeta(du) + \int_Y f_0(F(y)) \nu(dy) = \\ =
 \int_{\mathbb R^d} \varphi(u) \zeta(du) + \int_{\mathbb R^d} f_0(u) \, \nu \circ F^{-1} (du).  
\end{multline*}
Since $\zeta \preceq_c \nu \circ F^{-1}$, we obtain that the last expression does not exceed
$$
\int_{\mathbb R^d} (\varphi(u) + f_0(u)) \zeta(du) \le \sup_{u \in \mathbb R^d} (\varphi(u) + f_0(u)).
$$
Thus
$$
\int_{\mathbb R^d} \varphi(u) \zeta(du) + \int_Y \psi(y) \nu(dy) \le \sup_{u \in \mathbb R^d} (\varphi(u) + f_0(u)).
$$
Let $r < \sup_{u \in \mathbb R^d} (\varphi(u) + f_0(u))$. Then there exists $u_0 \in \mathbb R^d$ such that $\varphi(u_0) + f_0(u_0) > r$. 
Let
$$
\Lambda_u = \Bigl\{p \in \mathcal P_1(Y): \int_Y F(y) p(dy) = u\Bigr\}.
$$ 
Set 
$$
f_1(u) = \sup \Bigl\{\int_Y \psi(y) p(dy): p \in \Lambda_u\Bigr\}.
$$ 
The function $f_1$ is concave and $f_1(F(y)) \ge \psi(y)$ for all $y \in Y$, since $\delta_y \in \Lambda_{F(y)}$. 
Therefore, $f_1(u) \ge f_0(u)$ for all $u \in \mathbb R^d$ and hence $r < \varphi(u_0) + f_1(u_0)$. 
By the definition of $f_1(u_0)$ there exists a measure $p \in \Lambda_{u_0}$ such that 
$$r < \varphi(u_0) + \int_Y \psi(y) p(dy).$$ 
Set $\pi = \delta_{u_0} \otimes p$. Then $\pi \in \Lambda$ because $\int_Y F(y) p(dy) = u_0$. 
Along with this, we have 
$$r < \int_{\mathbb R^d \times Y} (\varphi(u) + \psi(y)) \, \pi(du dy).$$

Therefore, by Theorem \ref{th_strassen} the set $\Pi(\zeta, \nu) \cap \Lambda$ is non-empty. 
Hence there exists a measure $\pi \in \Pi(\zeta, \nu)$ such that $(u, F(y))$ forms a martingale with respect to $\pi$. 

Let $Y$ be a complete separable metric space and let $F \colon Y \to \mathbb R^d$ be a Borel measurable function. Then there exists a sequence of bounded continuous functions $F_n \colon Y \to \mathbb R^d$ with $\|F_n - F\|_{L^1(\nu)} \to 0$. As proven above, there exist measures $\pi_n \in \Pi(\zeta, \nu)$ such that $(u, F_n(y))$ forms a martingale with respect to $\pi_n$. Since the set $\Pi(\zeta, \nu)$ is uniformly tight, we can pass to a subsequence $\pi_n$ converging weakly to a measure $\pi \in \Pi(\zeta, \nu)$. Let us show that $(u, F(y))$ forms a martingale  with respect to $\pi$. We prove that for any function $\varphi \in C_b(\mathbb R^d)$ with $|\varphi| \le 1$
$$
\int_{\mathbb R^d \times Y} \varphi(u) F(y) \, \pi(du dy) = \int_{\mathbb R^d} \varphi(u) u \, \zeta(du).
$$
For any $m \in \mathbb N$ we have
\begin{multline*}
\Bigl|\int_{\mathbb R^d \times Y} \varphi(u) F(y) \, \pi(du dy)  - \int_{\mathbb R^d \times Y} \varphi(u) F_m(y) \, \pi(du dy)\Bigr| \le \\ 
\le \int_{\mathbb R^d \times Y} |F(y) - F_m(y)| \, \pi(du dy) = \|F_m - F\|_{L^1(\nu)}.
\end{multline*}
Since $\pi_n$ converges weakly to $\pi$,
$$
\int_{\mathbb R^d \times Y} \varphi(u) F_m(y) \, \pi(du dy) = \lim_{n \to \infty} 
\int_{\mathbb R^d \times Y} \varphi(u) F_m(y) \, \pi_n(du dy).
$$
Furthermore,
\begin{multline*}
\Bigl|\int_{\mathbb R^d \times Y} \varphi(u) F_m(y) \, \pi_n(du dy) - \int_{\mathbb R^d \times Y} \varphi(u) F_n(y) \, \pi_n(du dy)\Bigr| \le \\ \le
\int_{\mathbb R^d \times Y} |F_m(y) - F_n(y)| \, \pi_n(du dy) = \|F_n - F_m\|_{L^1(\nu)} \le 
\|F_m - F\|_{L^1(\nu)} + \|F_n - F\|_{L^1(\nu)}.
\end{multline*}
Since $(u, F_n(y))$ forms a martingale with respect to $\pi_n$, we have
$$
\int_{\mathbb R^d \times Y} \varphi(u) F_n(y) \, \pi_n(du dy) = \int_{\mathbb R^d} \varphi(u) u \, \zeta(du).
$$
Therefore, 
$$
\Bigl|\int_{\mathbb R^d \times Y} \varphi(u) F(y) \, \pi(du dy) -  \int_{\mathbb R^d} \varphi(u) u \, \zeta(du) \Bigr| \le 
2\|F_m - F\|_{L^1(\nu)}. 
$$
Letting $m \to \infty$, we obtain that 
$$
\int_{\mathbb R^d \times Y} \varphi(u) F(y) \, \pi(du dy) = \int_{\mathbb R^d} \varphi(u) u \, \zeta(du).
$$

The statement also holds true in the case where $Y$ is a Souslin space, since Borel measures on Souslin spaces are isomorphic to Borel measures on complete separable metric spaces.

\end{proof}

\begin{proof}[Proof of Theorem \ref{th_conv1}]
Let $\sigma \in \Pi(\mu, \nu)$. Set $T(x) = g(\sigma^x)$ for all $x \in X$. Then $\int_X h(x, g(\sigma^x)) \, \mu(dx) = \int_X h(x, Tx) \, \mu(dx)$. Let us show that $\mu \circ T^{-1} \preceq_c \nu \circ F^{-1}$. Let $\varphi \colon \mathbb R^d \to \mathbb R$ be a convex function integrable with respect to $\nu \circ F^{-1}$. Then by Jensen's inequality we obtain 
\begin{multline*}
\int_{\mathbb R^d} \varphi(u) \, \mu \circ T^{-1} (du) = \int_X \varphi(g(\sigma^x)) \, \mu(dx) = \int_X \varphi \Bigl(\int_Y F(y) \sigma^x(dy) \Bigr) \, \mu(dx) \le \\
\le \int_X \int_Y \varphi(F(y)) \sigma^x(dy) \, \mu(dx) = \int_Y \varphi(F(y)) \nu(dy) = \int_{\mathbb R^d} \varphi(u) \, \nu \circ F^{-1}(du).
\end{multline*}
Therefore, $\mu \circ T^{-1} \preceq_c \nu \circ F^{-1}$. 

Conversely, let $T \colon X \to Y$ and $\mu \circ T^{-1} \preceq_c \nu \circ F^{-1}$. Set $\zeta = \mu \circ T^{-1}$. 
By Proposition \ref{prop_conv1} there exists a measure $\pi \in \Pi(\zeta, \nu)$ such that
for conditional measures $\pi^u$ of $\pi$ with respect to $\zeta$ we have 
$$\int_Y F(y) \pi^u(dy) = u \quad \mbox{for  $\zeta$-a.e. $u$}.$$

Set $\sigma^x = \pi^{T(x)}$ for all $x \in X$ and $\sigma (dx \, dy) = \sigma^x(dy) \mu(dx)$. Then $\sigma \in \Pi(\mu, \nu)$, since 
$$\int_X \sigma^x \mu(dx) = \int_X \pi^{T(x)} \mu(dx) = \int_{\mathbb R^d} \pi^u \zeta(du) = \nu.$$ 
Moreover,
$$g(\sigma^x) = \int_Y F(y) \pi^{T(x)}(dy) = T(x) \quad \mbox{for $\mu$-a.e. $x$}$$ 
and hence $\int_X h(x, g(\sigma^x)) \mu(dx) = \int_X h(x, Tx) \mu(dx)$. 
\end{proof}

A sufficient condition for the existence of a minimum in the Monge problem (\ref{conv1}) is that the cost function $h$ has the form
$h(x, u) = \psi(x - u)$, where $\psi \colon \mathbb R^d \to \mathbb R$ is a strictly convex function (and $X = \mathbb R^d$), but in this case the function $h(x, g(p))$ is convex in $p$ so it is known that there exists a minimum in the nonlinear Kantorovich problem (\ref{gp}) by Theorem \ref{th_xp}.

\begin{remark}
\rm
Let $X = \mathbb R$ and let $Y$ be a Souslin space. Let $F \colon Y \to \mathbb R$ be a Borel measurable function and 
$h(x, u) = \psi(x - u)$ for all $x, u\in \mathbb R$, where $\psi \colon \mathbb R \to \mathbb R$ is a convex function. Let $\uline{T}$ be the weak monotone rearrangement of measures $\mu$ and $\nu \circ F^{-1}$ (see \cite{BaB2}). 
Set $\zeta = \mu \circ \uline{T}^{-1}$. By Proposition \ref{conv1} take a measure $\pi \in \Pi(\zeta, \nu)$ such that 
$\int_Y F(y) \pi^u(dy) = u$ for $\zeta$-a.e. $u$. Then the measure $\sigma = \pi^{\uline{T}(x)}(dy) \mu(dx)$ delivers a minimum in the problem (\ref{gp}).

Let $X = \mathbb R^d$ and let $Y$ be a Souslin space. Let $F \colon Y \to \mathbb R^d$ be a Borel measurable function and 
$h(x, u) = |x - u|^2$ for all $x, u \in \mathbb R^d$. By Brenier-Strassen theorem (see \cite{GJ}) take a convex function $\varphi$ with $1$-Lipschitz gradient such that $\mu \circ (\nabla \varphi)^{-1} \preceq_c \nu \circ F^{-1}$. 
Set $\zeta = \mu \circ (\nabla \varphi)^{-1}$ and by Proposition \ref{conv1} take a measure $\pi \in \Pi(\zeta, \nu)$ such that $\int_Y F(y) \pi^u(dy) = u$ for $\zeta$-a.e. $u$. 
Then the measure $\sigma = \pi^{\nabla \varphi(x)}(dy) \mu(dx)$ delivers a minimum in the problem (\ref{gp}).
\end{remark}

Consider the following examples where the infimum in the problem (\ref{conv1}) (and, therefore, in the problem (\ref{gp})) is not attained. 

\begin{example}\label{ex1}
\rm
Let $X = [0, 1]$, $Y = [0, 1] \times [-1, 1]$, $\mu = \lambda$ is Lebesgue measure on the interval $[0, 1]$, the measure $\nu$ equals half-sum of Lebesgue measures on the intervals $[0, 1] \times \{-1\}$ and $[0, 1] \times \{1\}$ respectively. 
Consider the nonlinear Kantorovich problem with barycentric cost function $h(x, b(p))$, where 
$$h(x, y) = (x - y_1)^2 + 1 - y_2^2, \quad x \in \mathbb R, \, y = (y_1, y_2) \in \mathbb R^2.$$ 

We show that the infimum in the problem (\ref{conv1}) equals $0$, but it is not attained. 
Suppose that there exists a mapping $T \colon X \to Y$ such that 
$$\mu \circ T^{-1} \preceq_c \nu \quad \mbox{and} \quad \int_X h(x, Tx) \mu(dx) = 0.$$ 
Then $Tx \in \{(x, -1), (x, 1)\}$ for $\mu$-a.e. $x$. 
Let $A = \{x \in [0, 1]: Tx = (x, -1)\}.$ Then 
$$\mu \circ T^{-1} = I_A \lambda|_{[0, 1] \times \{-1\}} + I_{[0, 1] \setminus A} \lambda|_{[0, 1] \times \{1\}}.$$ 
Since $\mu \circ T^{-1} \preceq_c \nu$, we obtain that for any convex function $\varphi \colon [0, 1] \to \mathbb R$ 
$$\int_A \varphi(y) dy \le \frac{1}{2} \int_0^1 \varphi(y) dy$$
and 
$$\int_{[0, 1] \setminus A} \varphi(y) dy \le \frac{1}{2} \int_0^1 \varphi(y) dy.$$
This implies that $\int_A \varphi(y) dy = \frac{1}{2} \int_0^1 \varphi(y) dy$ for all convex functions $\varphi \colon [0, 1] \to \mathbb R$. Therefore, $I_A = \frac{1}{2}$ a.e., which is a contradiction. 
Thus there is no mapping $T \colon X \to Y$ with $\mu \circ T^{-1} \preceq_c \nu$ and $\int_X h(x, Tx) \mu(dx) = 0$.

We show that there is a sequence of mappings $T_n \colon X \to Y$ such that 
$$\mu \circ T_n^{-1} \preceq_c \nu \quad \mbox{and} \quad \int_X h(x, T_n x) \mu(dx) \to 0.$$ 
Set 
$$T_n(x) = (2x - (2k)/2^n, 1) \quad \mbox{if  } x \in [(2k)/2^n, (2k+1)/2^n),$$
$$T_n(x) = (2x - (2k+2)/2^n, -1) \quad \mbox{if  } x \in [(2k+1)/2^n, (2k+2)/2^n),$$
where $k \in \{0, \dots, 2^{n - 1} - 1\}$. Then $\mu \circ T_n^{-1} = \nu$ and
$$
\int_X h(x, T_n x) \mu(dx) = \int_0^1 (x - (T_n x)_1)^2 dx \le 1/2^{n}.
$$
Therefore, the infimum in the problem (\ref{conv1}) is not attained.  
\end{example}

\begin{example}\label{ex2}
\rm
Let $X = [0, 1]$, $Y = [-1, 1]$, $\mu$ is Lebesgue measure on $[0, 1]$, $\nu$ is normalized Lebesgue measure on $[-1, 1]$. Consider the nonlinear Kantorovich problem with barycentric cost function $h(x, b(p))$, 
where $h(x, y) = (x^2 - y^2)^2$.
We show that the infimum in the problem (\ref{conv1}) equals $0$, but it is not attained. 
Suppose that there exists a mapping $T \colon X \to Y$ such that 
$$\mu \circ T^{-1} \preceq_c \nu \quad \mbox{and} \quad \int_X h(x, Tx) \mu(dx) = 0.$$ 
Then $Tx \in \{x, -x\}$ for $\mu$-a.e. $x$. 
Let $A = \{x \in [0, 1]: Tx = x\}$. Then 
$$\mu \circ T^{-1} = I_A + I_{-[0, 1] \setminus A}.$$
Set $\varphi(x) = 0$ for $x \le 0$ and $\varphi(x) = x^k$ for $x > 0$, where $k \in \mathbb N$. 
The function $\varphi$ is convex, therefore by the property $\mu \circ T^{-1} \preceq_c \nu$ we obtain that
$$
\int_A x^k dx \le \frac{1}{2} \int_0^1 x^k dx.
$$
Similarly we get 
$$
\int_{[0, 1] \setminus A} x^k dx \le \frac{1}{2} \int_0^1 x^k dx.
$$
Therefore, $\int_A x^k dx = \frac{1}{2} \int_0^1 x^k dx$ for any $k \in \mathbb N$. 
Thus the measures $x I_A dx$ and $\frac{1}{2} x I_{[0, 1]} dx$ are equal. Then $I_A = \frac{1}{2}$ a.e., which is a contradiction. 

Likewise as in Example \ref{ex1}, we can take a sequence of mappings $T_n \colon X \to Y$ such that 
$\mu \circ T_n^{-1} \preceq_c \nu$ and $\int_X h(x, T_n x) \mu(dx) \to 0$. 

This example also shows that the statement of Theorem \ref{th_psi} is not true if in the case \rm{(ii)} of Theorem \ref{th_psi} we do not assume that the convex function $\Psi$ is increasing. 
Indeed, put in Theorem \ref{th_psi} $\Psi(v) = v^2$, $f(x) = x^2$, $g(p) = b(p)^2$.  
The sets $\{p \in \mathcal P(Y): g(p) \le c\}$ are convex for every $c \in \mathbb R$ but the function $\Psi$ is not increasing. 
As we have shown above, there is no minimum in the nonlinear Kantorovich problem with the cost function $\Psi(g(p) - f(x))$. 
\end{example}

We also provide an infinite-dimensional analogue of Theorem \ref{th_conv1}. 

Let $U$ be a separable Banach space. Denote by $\|\cdot\|_{U}$ the norm on $U$. We say that a measure $\mu \in \mathcal P(U)$ has the barycenter $a \in U$ if 
$$l(a) = \int_U l(u) \mu(du)$$ for every continuous linear functional $l \in U^*$. 
It is known that any measure $\mu \in \mathcal P(U)$ with $\int_U \|u\|_U \mu(du) < \infty$ has a barycenter. 

Define the convex dominance for probability measures on $U$. Let $\mu, \nu \in \mathcal P(U)$. We say that the measure $\mu$ is dominated by $\nu$ in the convex order (and denote by $\mu \preceq_c \nu$) if both measures $\mu$ and $\nu$ have barycenters and $\int_U \varphi d\mu \le \int_U \varphi d\nu$ for any lower semicontinuous convex function $\varphi \colon U \to \mathbb R$ integrable with respect to the measure $\nu$. 

\begin{theorem}\label{th_conv2}
Let $X$ and $Y$ be Souslin spaces, $\mu \in \mathcal P(X)$, $\nu \in \mathcal P(Y)$. Consider the nonlinear problem (\ref{gp}), where $g(p) = \int_Y F(y) p(dy)$, $F \colon Y \to U$ is a Borel measurable mapping, $\|F\|_U$ is integrable with respect to the measure $\nu$ and $h \colon X \times U \to \mathbb R$ is a Borel measurable function. Then

(i) the infimum in the nonlinear Kantorovich problem (\ref{gp}) equals the infimum in the following Monge problem with convex dominance 
\begin{equation} \label{conv2}
\inf_{\sigma \in \Pi(\mu, \nu)} \int_X h(x, g(\sigma^x)) \mu(dx) = 
\inf_{\scriptstyle T \colon X \to U, \atop \scriptstyle \mu \circ T^{-1} \preceq_c \, \nu \circ F^{-1}} \int_X h(x, Tx) \mu(dx),
\end{equation}

(ii) there exists a minimum in the problem (\ref{gp}) if and only if there exists a minimum in the problem (\ref{conv2}).

\end{theorem}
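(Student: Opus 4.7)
The plan is to mirror the proof of Theorem \ref{th_conv1}, replacing $\mathbb R^d$ by the separable Banach space $U$ throughout, and to prove along the way an infinite-dimensional analogue of Proposition \ref{prop_conv1} that yields the required martingale-type coupling. Since $U$ is a complete separable metric space, Theorem \ref{th_strassen} applies with $S=U$ directly, so the whole scheme has a chance of going through once the concave-envelope argument is adapted to $U$.

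First I would handle the easy direction $\ge$ of (\ref{conv2}). Given $\sigma\in\Pi(\mu,\nu)$, set $T(x)=g(\sigma^x)=\int_Y F(y)\,\sigma^x(dy)$, which is well defined because $\int_X\int_Y\|F(y)\|_U\,\sigma^x(dy)\,\mu(dx)=\int_Y\|F(y)\|_U\,\nu(dy)<\infty$, so the barycenter exists in $U$ for $\mu$-a.e.\ $x$; Borel measurability of $T$ reduces to measurability of $x\mapsto l(T(x))$ for $l\in U^*$. For every lower semicontinuous convex $\varphi\colon U\to\mathbb R$ that is $(\nu\circ F^{-1})$-integrable, the Jensen inequality on separable Banach spaces gives $\varphi(T(x))\le\int_Y\varphi(F(y))\,\sigma^x(dy)$, and integrating in $x$ yields $\mu\circ T^{-1}\preceq_c\nu\circ F^{-1}$. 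Since $\int_X h(x,g(\sigma^x))\,\mu(dx)=\int_X h(x,T(x))\,\mu(dx)$, this gives half of (i); combined with the converse it will also yield (ii).

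The main step is the infinite-dimensional Strassen-type result: for any $\zeta\in\mathcal P(U)$ with $\int_U\|u\|_U\,\zeta(du)<\infty$ and $\zeta\preceq_c\nu\circ F^{-1}$, there exists $\pi\in\Pi(\zeta,\nu)$ satisfying $\int_Y F(y)\,\pi^u(dy)=u$ for $\zeta$-a.e.\ $u$. I would imitate the proof of Proposition \ref{prop_conv1}, applying Theorem \ref{th_strassen} with $S=U$, $T=Y$, weights $\hat\varphi(u)=1+\|u\|_U$, $\hat\psi(y)=1+\|F(y)\|_U$, and $\Lambda$ the $\mathcal J$-closed convex set of $\pi\in\mathcal P_1(U\times Y)$ such that $\int\chi(u)\,l(F(y)-u)\,\pi(du\,dy)=0$ for all $l\in U^*$ and all bounded continuous $\chi\colon U\to\mathbb R$. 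To verify the dual inequality I would set $f_0(u)=\inf\{f(u):f\colon U\to\mathbb R$ concave upper semicontinuous, $f(F(y))\ge\psi(y)\ \forall y\}$, push $\psi$ past $F$ to get $\int\psi\,d\nu\le\int f_0\,d(\nu\circ F^{-1})$, apply $\zeta\preceq_c\nu\circ F^{-1}$ in the concave direction to reach $\int f_0\,d(\nu\circ F^{-1})\le\int f_0\,d\zeta$, and for each $r<\sup_U(\varphi+f_0)$ choose $u_0\in U$ with $\varphi(u_0)+f_0(u_0)>r$ and $p\in\Lambda_{u_0}=\{p\in\mathcal P_1(Y):\int_Y F\,dp=u_0\}$ with $\varphi(u_0)+\int\psi\,dp>r$, so that $\pi=\delta_{u_0}\otimes p$ witnesses the required bound in $\Lambda$. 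Having obtained the coupling, I set $\sigma^x=\pi^{T(x)}$ and $\sigma(dx\,dy)=\sigma^x(dy)\,\mu(dx)$: then $\sigma\in\Pi(\mu,\nu)$, $g(\sigma^x)=T(x)$ for $\mu$-a.e.\ $x$, and both (i) and (ii) follow exactly as at the end of the proof of Theorem \ref{th_conv1}.

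The main obstacle is the concave-envelope step in infinite dimensions: I must ensure that $f_0$ is finite and that the auxiliary function $f_1(u)=\sup\{\int_Y\psi\,dp:p\in\Lambda_u\}$ dominates $f_0$, which requires a sufficiently rich supply of continuous concave majorants for $\psi\circ F^{-1}$ on $U$. I plan to address this by working with continuous affine majorants of the form $u\mapsto l(u)+c$ with $l\in U^*$, whose existence is guaranteed by the linear bound $|\psi(y)|\le C(1+\|F(y)\|_U)$ built into $\mathcal P_{\hat\chi}$ together with the Hahn--Banach theorem on $U$; concavity of $f_1$ is purely algebraic and survives unchanged, while uniform tightness of $\Pi(\zeta,\nu)$ and Souslin measurability of the selector $x\mapsto\pi^{T(x)}$ transcribe directly from the finite-dimensional case.
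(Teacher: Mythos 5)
Your overall architecture is the same as the paper's: reduce to an infinite-dimensional Strassen-type coupling (Proposition \ref{prop_conv2} in the paper) proved via Theorem \ref{th_strassen} with $S=U$, then transfer between plans $\sigma$ and maps $T$ exactly as in Theorem \ref{th_conv1}. However, there are two genuine gaps in the way you propose to carry out the coupling step.

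First, the ``main obstacle'' you identify is real, but your proposed fix does not work. The argument needs $f_1(u)=\sup\{\int_Y\psi\,dp:p\in\Lambda_u\}$ to be a \emph{real-valued upper semicontinuous} concave function on all of $U$, so that $f_1$ belongs to the class $V$ over which $f_0$ is an infimum and hence $f_1\ge f_0$ (in infinite dimensions concavity no longer implies continuity, and $V$ must consist of upper semicontinuous functions for the convex-order inequality to apply). Your plan to supply continuous \emph{affine} majorants $u\mapsto l(u)+c$ of $\psi$ via Hahn--Banach fails: the bound $|\psi(y)|\le C(1+\|F(y)\|_U)$ only gives a \emph{convex} majorant, and e.g.\ $\psi(y)=\|F(y)\|_U$ with $F(Y)$ unbounded admits no affine (indeed no finite concave) majorant; moreover, restricting $V$ to affine functions enlarges $f_0$ and makes the needed inequality $f_1\ge f_0$ strictly harder, since $f_1$ is not affine. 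The paper's resolution is different: it proves directly that $f_1$ is Lipschitz (hence upper semicontinuous and finite, and $\Lambda_u\neq\emptyset$ for every $u$), and this requires $F$ to be \emph{surjective}; surjectivity is arranged by replacing $Y$ with the disjoint union $\tilde Y=Y\sqcup U$ and extending $F$ by the identity on $U$. Nothing in your proposal plays this role.

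Second, you apply Theorem \ref{th_strassen} directly to a Borel measurable $F$. But the weight $\hat\psi(y)=1+\|F(y)\|_U$ must be continuous, and the closedness of $\Lambda$ in the topology $\mathcal J$ also uses continuity of $F$; so Strassen's theorem only covers continuous $F$. The paper bridges the gap by approximating a Borel $F$ in $L^1(\nu)$ by bounded continuous mappings $F_n$ (via a Lusin-type argument for Banach-valued maps), extracting a weak limit of the resulting martingale couplings $\pi_n$, and then passing from the weak martingale identity (tested against $q\otimes l$, $q\in C_b(U)$, $l\in U^*$) to the pointwise conditional-barycenter statement using a countable weak-$*$ dense subset of $U^*$. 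These approximation and limit steps are absent from your proposal and are not routine transcriptions of the finite-dimensional case.
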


For the proof of Theorem \ref{th_conv2} we use the following infinite-dimensional analogue of Proposition \ref{prop_conv1}. 

\begin{proposition}\label{prop_conv2}
Let $U$ be a separable Banach space, let $Y$ be a Souslin space and let $F \colon Y \to U$ be a Borel measurable mapping. 
Suppose that $\zeta \in \mathcal P(U)$, $\nu \in \mathcal P(Y)$, $\|F\|_U$ is integrable with respect to the measure $\nu$ and $\zeta \preceq_c \nu \circ F^{-1}$. Then there exists a measure $\pi \in \Pi(\zeta, \nu)$ such that for conditional measures $\pi^u$ of the measure $\pi$ with respect to $\zeta$ we have 
$$\int_Y F(y) \pi^u(dy) = u \quad \mbox{for  $\zeta$-a.e. $u$}.$$
\end{proposition}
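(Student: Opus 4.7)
The plan is to deduce the statement from the finite-dimensional Proposition~\ref{prop_conv1} by projecting onto a norming sequence of linear functionals and lifting the resulting couplings back to $U$. Since $U$ is a separable Banach space, Hahn--Banach together with separability produces a sequence $\{l_k\}_{k\ge 1}\subset U^*$ with $\|l_k\|_{U^*}\le 1$ that norms $U$: $\|u\|_U=\sup_k|l_k(u)|$ for every $u\in U$; in particular, $\{l_k\}$ separates the points of $U$. As in the last sentence of the proof of Proposition~\ref{prop_conv1}, one may also assume that $Y$ is a complete separable metric space. Since $u\mapsto\|u\|_U$ is continuous and convex and is $\nu\circ F^{-1}$-integrable, the convex dominance $\zeta\preceq_c\nu\circ F^{-1}$ also yields $\int_U\|u\|_U\,\zeta(du)<\infty$, so every pushforward below has finite first moment in norm.

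First I would set $J_N(u)=(l_1(u),\dots,l_N(u))$, a continuous linear map $U\to\mathbb R^N$. Convex dominance is preserved under $J_N$ because $\varphi\circ J_N$ is lsc convex on $U$ whenever $\varphi$ is lsc convex on $\mathbb R^N$; hence $\zeta\circ J_N^{-1}\preceq_c\nu\circ(J_N\circ F)^{-1}$. Applying Proposition~\ref{prop_conv1} to the triple $(\zeta\circ J_N^{-1},\nu,J_N\circ F)$ produces a coupling $\pi_N\in\Pi(\zeta\circ J_N^{-1},\nu)$ whose conditional measures satisfy
$$
\int_Y J_N(F(y))\,\pi_N^v(dy)=v\quad\text{for $(\zeta\circ J_N^{-1})$-a.e. }v\in\mathbb R^N.
$$

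Next I would glue back to $U$ by setting $\bar\pi_N(du\,dy)=\pi_N^{J_N(u)}(dy)\,\zeta(du)$. A direct check shows $\bar\pi_N\in\Pi(\zeta,\nu)$, and for every $k\le N$ and $\zeta$-a.e.\ $u$,
$$
\int_Y l_k(F(y))\,\bar\pi_N^u(dy)=l_k(u).
$$
Since $\Pi(\zeta,\nu)$ is uniformly tight, a subsequence $\bar\pi_{N_j}$ converges weakly to some $\bar\pi\in\Pi(\zeta,\nu)$. For each fixed $k$ I would pass to the limit in
$$
\int_{U\times Y}\phi(u)l_k(F(y))\,\bar\pi_{N_j}(du\,dy)=\int_U\phi(u)l_k(u)\,\zeta(du),\qquad \phi\in C_b(U),\ N_j\ge k,
$$
to get the same identity for $\bar\pi$, which is equivalent to $\int_Y l_k(F(y))\,\bar\pi^u(dy)=l_k(u)$ for every $k$ and $\zeta$-a.e.\ $u$. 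Because $\|F\|_U$ is $\nu$-integrable and the second marginal of $\bar\pi$ equals $\nu$, Fubini gives $\int_Y\|F\|_U\,\bar\pi^u(dy)<\infty$ for $\zeta$-a.e.\ $u$, so the Bochner integral $\int_Y F(y)\,\bar\pi^u(dy)\in U$ exists; its $l_k$-components agree with those of $u$, and the norming property of $\{l_k\}$ then forces $\int_Y F(y)\,\bar\pi^u(dy)=u$ for $\zeta$-a.e.\ $u$.

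The main obstacle is the weak-limit step, since the integrand $\phi(u)l_k(F(y))$ is unbounded when $F$ is merely Borel. I would handle it exactly as at the end of the proof of Proposition~\ref{prop_conv1}: approximate $F$ in $L^1(\nu;U)$ by bounded continuous $U$-valued maps $F_m$ (available since Borel maps into a separable Banach space are Bochner measurable and Lusin's theorem applies), exploit that every $\bar\pi_N$ has second marginal $\nu$ to obtain the uniform tail estimate
$$
\Bigl|\int\phi(u)l_k(F(y))\,d\bar\pi_N-\int\phi(u)l_k(F_m(y))\,d\bar\pi_N\Bigr|\le\|\phi\|_\infty\|l_k\|_{U^*}\|F-F_m\|_{L^1(\nu;U)},
$$
together with the analogous bound for $\bar\pi$, and then send $N_j\to\infty$ followed by $m\to\infty$ to close the identity.
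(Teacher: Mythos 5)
Your proof is correct, but it takes a genuinely different route from the paper. The paper proves the infinite-dimensional statement from scratch by running the Strassen-type argument directly on $U\times Y$: it defines the martingale set $\Lambda\subset\mathcal P_1(U\times Y)$, verifies the Strassen inequality via the concave envelope $f_1(u)=\sup\{\int\psi\,dp\colon p\in\Lambda_u\}$ (where the new work compared to Proposition~\ref{prop_conv1} is showing $f_1$ is Lipschitz, hence upper semicontinuous, because the infinite-dimensional convex order only tests lower semicontinuous convex functions), handles non-surjective $F$ by passing to the disjoint union $Y\sqcup U$, and only then does the $L^1$-approximation and weak-limit step for Borel $F$. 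You instead reduce to the already-proved finite-dimensional Proposition~\ref{prop_conv1} by projecting along a norming sequence $\{l_k\}\subset U^*$, gluing each finite-dimensional coupling back to $\Pi(\zeta,\nu)$ via $\bar\pi_N(du\,dy)=\pi_N^{J_N(u)}(dy)\,\zeta(du)$, and extracting a weak limit; the martingale identity survives the limit coordinate-by-coordinate by exactly the same $F_m$-approximation trick the paper uses, and the norming property upgrades the countably many scalar identities to the Bochner identity $\int_Y F\,d\bar\pi^u=u$. Your reduction buys a shorter argument that entirely avoids re-running Strassen's theorem in the Banach space (and with it the surjectivity and upper-semicontinuity issues), at the cost of an extra compactness/diagonal step; the paper's direct approach is self-contained in $U$ and makes the Lipschitz regularity of the dual concave envelope explicit, which is of some independent interest. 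All the auxiliary facts you invoke (finiteness of $\int_U\|u\|_U\,d\zeta$ from testing the norm against the convex order, preservation of convex dominance under continuous linear images, uniform tightness of $\Pi(\zeta,\nu)$, Lusin approximation of Borel $U$-valued maps) are available in the paper's setting, so I see no gap.
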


\begin{proof}[Proof of Proposition \ref{prop_conv2}]

We first consider the case where $Y$ is a complete separable metric space and $F \colon Y \to U$ is a surjective continuous mapping.   
Denote by $\mathcal P_1(Y) \subset \mathcal P(Y)$ the set of all measures $\tilde \nu \in \mathcal P(Y)$ such that $\|F\|_{U}$ is $\tilde \nu$-integrable, $\mathcal P_1(U) \subset \mathcal P(U)$ --- the set of all measures 
$\tilde \zeta \in \mathcal P(U)$ such that $\|u\|_U$ is $\tilde \zeta$-integrable,  
$\mathcal P_1(U \times Y) \subset \mathcal P(U \times Y)$ --- the set of all measures $\pi \in \mathcal P(U \times Y)$ such that the function $\|u\|_U + \|F(y)\|_U$ is $\pi$-integrable. 
Endow $\mathcal P_1(U \times Y)$ with the topology $\mathcal J$ generated by the functionals $\pi \mapsto \int \chi d\pi$ for all continuous functions $\chi \colon U \times Y \to \mathbb R$ with 
$$\sup\{|\chi(u, y)|/(1 + \|u\|_U + \|F(y)\|_U): u \in U, y \in Y\} < \infty.$$

Let
\begin{multline*}
\Lambda = \{\pi \in \mathcal P_1(U \times Y): \int_{U \times Y} q(u) l(F(y)) \, \pi(du dy) = \int_{U \times Y} q(u) l(u) \, \pi(du dy) \\ \forall q \in C_b(U), l \in U^*\}, 
\end{multline*}
i.e. $\Lambda$ is the set of all measures $\pi \in \mathcal P_1(U \times Y)$ such that $(u, F(y))$ forms a martingale with respect to $\pi$. 

The set $\Lambda$ is convex and closed in the topology $\mathcal J$. Apply Theorem \ref{th_strassen} to prove that the set 
$\Pi(\zeta, \nu) \cap \Lambda$ is non-empty. 

Let $\varphi \colon U \to \mathbb R$, $\psi \colon Y \to \mathbb R$  be continuous functions such that   
$\sup \{|\varphi(u)|/(1 + \|u\|_U): u \in U\} < \infty$ and $\sup \{|\psi(y)|/(1+\|F(y)\|_U): y \in Y\} < \infty$. 
Let us show that
$$
\int_U \varphi(u) \zeta(du) + \int_Y \psi(y) \nu(dy) \le \sup\Bigl\{\int_{U \times Y} (\varphi(u) + \psi(y)) \, \pi(du dy): \pi \in \Lambda \Bigr\}.
$$

Let $V$ be the set of all upper semicontinuous concave functions $f \colon U \to \mathbb R$ such that $f(F(y)) \ge \psi(y)$ for all $y \in Y$. Set $f_0(u) = \inf_{f \in V} f(u)$. Then $f_0$ is an upper semicontinuous concave function. We have
\begin{multline*}
\int_U \varphi(u) \zeta(du) + \int_Y \psi(y) \nu(dy) \le \int_Y \varphi(u) \zeta(du) + \int_Y f_0(F(y)) \nu(dy) = \\ =
 \int_U \varphi(u) \zeta(du) + \int_U f_0(u) \, \nu \circ F^{-1} (du). 
\end{multline*}
Since $\zeta \preceq_c \nu \circ F^{-1}$, the last expression does not exceed
$$
\int_U (\varphi(u) + f_0(u)) \zeta(du) \le \sup_{u \in U} (\varphi(u) + f_0(u)).
$$
Let $r < \sup_{u \in U} (\varphi(u) + f_0(u))$.
Then there exists $u_0 \in U$ such that $\varphi(u_0) + f_0(u_0) > r$. 
Let
$$
\Lambda_u = \Bigl\{p \in \mathcal P_1(Y): \int_Y F(y) p(dy) = u\Bigr\}. 
$$ 
Set 
$$
f_1(u) = \sup \Bigl\{\int_Y \psi(y) p(dy): p \in \Lambda_u\Bigr\}.
$$ 

The function $f_1$ is concave and upper semicontinuous (moreover, $f_1$ is Lipschitz). 
Indeed, let us prove that $|f_1(u_2) - f_1(u_1)| \le C \|u_2 - u_1\|_U$ for all $u_1, u_2 \in U$, where 
$$C = \sup \{|\psi(y)|/(1+\|F(y)\|_U): y \in Y\}.$$
Let $p_1 \in \Lambda_{u_1}$. Show that
$$f_1(u_2) \ge \int_Y \psi(y) p_1(dy) - C\|u_2 - u_1\|_U.$$ 
Fix $\alpha \in (0, 1)$.  By surjectivity of $F$ take $y_0 \in Y$ with $F(y_0) = (u_2 - \alpha u_1)/(1 - \alpha)$ and set 
$p_2 = \alpha p_1 + (1 - \alpha) \delta_{y_0}$. Then
$$
\int_Y F(y) p_2(dy) = \alpha \int_Y F(y) p_1(dy) + (1 - \alpha) F(y_0) = \alpha u_1 + (1 - \alpha) F(y_0) = u_2,
$$ 
i.e. $p_2 \in \Lambda_{u_2}$. Therefore, 
\begin{multline*}
f_1(u_2) \ge \int_Y \psi(y) p_2(dy) = \alpha \int_Y \psi(y) p_1(dy) + (1 - \alpha) \psi(y_0) \ge \alpha \int_Y \psi(y) p_1(dy) - \\ - C (1 - \alpha) (1 + \|F(y_0)\|_U) = \alpha \int_Y \psi(y) p_1(dy) - C (1 - \alpha) - C \|u_2 - \alpha u_1\|_U.
\end{multline*}
Letting $\alpha \to 1$, we obtain the inequality $f_1(u_2) \ge \int_Y \psi(y) p_1(dy) - C \|u_2 - u_1\|_U$. 
Thus, $f_1$ is Lipschitz with constant $C$. 

Furthermore, $f_1(F(y)) \ge \psi(y)$ for all $y \in Y$, since $\delta_y \in \Lambda_{F(y)}$. 
Therefore, $f_1(u) \ge f_0(u)$ for all $u \in U$ and hence $r < \varphi(u_0) + f_1(u_0)$. 
By the definition of $f_1(u_0)$ there exists a measure $p \in \Lambda_{u_0}$ such that 
$$r < \varphi(u_0) + \int_Y \psi(y) p(dy).$$ 
Set $\pi = \delta_{u_0} \otimes p$. Then $\pi \in \Lambda$ because $\int_Y F(y) p(dy) = u_0$. 
Along with this, we have 
$$r < \int_{U \times Y} (\varphi(u) + \psi(y)) \, \pi(du dy).$$
Therefore, by Theorem \ref{th_strassen} the set $\Pi(\zeta, \nu) \cap \Lambda$ is non-empty. 
Hence there exists a measure $\pi \in \Pi(\zeta, \nu)$ such that $(u, F(y))$ forms a martingale with respect to $\pi$.

Let $Y$ be a complete separable metric space and let $F \colon Y \to U$ be a continuous mapping, $F$ is not surjective. 
Take the disjoint union $\tilde Y = Y \sqcup U$, then $\tilde Y$ is a complete separable metric space. 
Define the mapping $\tilde F \colon \tilde Y \to U$: $\tilde F(\tilde y) = F(\tilde y)$ for all $\tilde y \in Y$ and $\tilde F(\tilde y) = \tilde y$ for all $\tilde y \in U$. The mapping $\tilde F$ is continuous on $\tilde Y$ and $\tilde F(\tilde Y) = U$. 
Extend $\nu$ to a measure on $\tilde Y$.
Then there exists a measure $\pi \in \mathcal P(U \times \tilde Y)$ such that $\pi \in \Pi(\zeta, \nu)$ and $(u, \tilde F(\tilde y))$ forms a martingale with respect to $\pi$. Since $\nu$ is concentrated on $Y$, we obtain that $\pi \in \mathcal P(U \times Y)$
and $(u, F(y))$ forms a martingale with respect to $\pi$. 

Let $Y$ be a complete separable metric space and let $F \colon Y \to U$ be a Borel measurable mapping. 
Take a sequence of bounded continuous mappings $F_n \colon Y \to U$ with $\|\|F_n - F\|_U\|_{L^1(\nu)} \to 0$. 
Indeed, set $\tilde F_k(y) = F(y)$ if $\|F(y)\|_U \le k$ and $\tilde F_k(y) = 0$ otherwise, where $k \in \mathbb N$. 
Then $\|\|\tilde F_k - F\|_U\|_{L^1(\nu)} \to 0$ as $k \to \infty$. 
By Lusin's theorem (see \cite[Theorem~7.1.13]{B07}) for any $n \in \mathbb N$ we can take a continuous mapping $\tilde F_{k, n} \colon Y \to U$ such that
$\nu(y \in Y: \tilde F_{k, n}(y) \neq \tilde F_k(y)) < 1/n$ and $\sup_{y \in Y} \|\tilde F_{k, n}(y)\|_U \le k$. Then $\|\|\tilde F_{k, n} - \tilde F_k\|_U\|_{L^1(\nu)} \le 2k/n$. The mappings $\tilde F_{k, n}$ are bounded continuous and we can take a sequence $\tilde F_{k, n_k}$ such that $\|\|\tilde F_{k, n_k} - F\|_U\|_{L^1(\nu)} \to 0$. 

As proven above, there exist measures $\pi_n \in \Pi(\zeta, \nu)$ such that $(u, F_n(y))$ forms a martingale with respect to $\pi_n$. Since the set $\Pi(\zeta, \nu)$ is uniformly tight, we can pass to a subsequence $\pi_n$ converging weakly to a measure $\pi \in \Pi(\zeta, \nu)$. Let us show that $(u, F(y))$ forms a martingale  with respect to $\pi$. We prove that for any function $\varphi \in C_b(\mathbb R^d)$ with $|\varphi| \le 1$ and for any continuous linear functional $l \in U^*$ 
with $\|l\|_{U^*} \le 1$
$$
\int_{U \times Y} \varphi(u) l(F(y)) \, \pi(du dy) = \int_U \varphi(u) l(u) \, \zeta(du).
$$
For any $m \in \mathbb N$ we have
\begin{multline*}
\Bigl|\int_{U \times Y} \varphi(u) l(F(y)) \, \pi(du dy)  - \int_{U \times Y} \varphi(u) l(F_m(y)) \, \pi(du dy)\Bigr| \le \\ 
\le \int_{U \times Y} \|F(y) - F_m(y)\|_U \, \pi(du dy) = \|\|F_m - F\|_U\|_{L^1(\nu)}.
\end{multline*}
Since $\pi_n$ converges weakly to $\pi$,
$$
\int_{U \times Y} \varphi(u) l(F_m(y)) \, \pi(du dy) = \lim_{n \to \infty} 
\int_{U \times Y} \varphi(u) l(F_m(y)) \, \pi_n(du dy).
$$
Furthermore,
\begin{multline*}
\Bigl|\int_{U \times Y} \varphi(u) l(F_m(y)) \, \pi_n(du dy) - \int_{U \times Y} \varphi(u) l(F_n(y)) \, \pi_n(du dy)\Bigr| \le \\ \le
\int_{U \times Y} \|F_m(y) - F_n(y)\|_U \, \pi_n(du dy) \le 
\|\|F_m - F\|_U\|_{L^1(\nu)} + \|\|F_n - F\|_U\|_{L^1(\nu)}.
\end{multline*}
Since $(u, F_n(y))$ forms a martingale with respect to $\pi_n$, we have
$$
\int_{U \times Y} \varphi(u) l(F_n(y)) \, \pi_n(du dy) = \int_U \varphi(u) l(u) \, \zeta(du).
$$
Therefore, 
$$
\Bigl|\int_{U \times Y} \varphi(u) l(F(y)) \, \pi(du dy) -  \int_{U} \varphi(u) l(u) \, \zeta(du) \Bigr| \le 
2\|\|F_m - F\|_U\|_{L^1(\nu)}. 
$$
Letting $m \to \infty$, we obtain that 
$$
\int_{U \times Y} \varphi(u) l(F(y)) \, \pi(du dy) = \int_{U} \varphi(u) l(u) \, \zeta(du).
$$

The statement also holds true in the case where $Y$ is a Souslin space, since Borel measures on Souslin spaces are isomorphic to Borel measures on complete separable metric spaces.
 
Note that the condition
$$
\int_{U \times Y} \varphi(u) l(F(y)) \, \pi(du dy) = \int_{U} \varphi(u) l(u) \, \zeta(du) \quad 
\forall \varphi \in C_b(U), \, l \in U^*
$$
implies that
$$\int_Y F(y) \pi^u(dy) = u \quad \mbox{for  $\zeta$-a.e. $u$}.$$
Indeed, for any $l \in U^*$ we obtain that
$$
\int_Y l(F(y)) \pi^u(dy) = l(u) \quad \mbox{for  $\zeta$-a.e. $u$}.
$$
Take a set $\{l_n, n \in \mathbb N\} \subset U^*$ such that any $l \in U^*$ is a limit of a subsequence of $\{l_n\}$ in the weak-$*$ topology on $U^*$
(such a set exists since $U$ is separable). Then for $\zeta$-a.e. $u$ we have
$$
\int_Y l_n(F(y)) \pi^u(dy) = l_n(u) \quad \forall n \in \mathbb N.
$$ 
The last relation implies that $\int_Y l(F(y)) \pi^u(dy) = l(u)$ for all $l \in U^*$. 
Indeed, for any $l \in U^*$ take a subsequence $\{l_{n_k}\}$ which converges weakly-$*$ to $l$. 
The weakly-$*$ convergent sequence is bounded: $\|l_{n_k}\|_{U^*} \le C$ for some $C > 0$.
Then $l_{n_k}(F(y)) \to l(F(y))$ for all $y \in Y$ and $|l_{n_k}(F(y))| \le C \|F(y)\|_U$. 
Thus we obtain the desired equality passing to the limit as $k \to \infty$ and using the fact that
$\|F(y)\|_U$ is $\pi^u$-integrable for $\zeta$-a.e. $u$. (because $\|F(y)\|_U$ is $\nu$-integrable).

\end{proof}

\begin{proof}[Proof of Theorem \ref{th_conv2}]
Let $\sigma \in \Pi(\mu, \nu)$. Set $T(x) = g(\sigma^x)$ for all $x \in X$.  We show that $\mu \circ T^{-1} \preceq_c \nu \circ F^{-1}$. Let $\varphi \colon U \to \mathbb R$ be a lower semicontinuous convex function. Then by Jensen's inequality 
\begin{multline*}
\int_U \varphi(u) \, \mu \circ T^{-1} (du) = \int_X \varphi \Bigl(\int_Y F(y) \sigma^x(dy) \Bigr) \mu(dx) \le \\
\le \int_X \int_Y \varphi(F(y)) \sigma^x(dy) \mu(dx) = \int_U \varphi(u) \, \nu \circ F^{-1}(du).
\end{multline*}
Therefore, $\mu \circ T^{-1} \preceq_c \nu \circ F^{-1}$. 

Conversely, let $T \colon X \to Y$ and $\mu \circ T^{-1} \preceq_c \nu \circ F^{-1}$. Denote $\zeta = \mu \circ T^{-1}$. 
By Proposition \ref{prop_conv2} we take a measure $\pi \in \Pi(\zeta, \nu)$ such that
for conditional measures $\pi^u$ of $\pi$ with respect to $\zeta$ we have 
$$\int_Y F(y) \pi^u(dy) = u \quad \mbox{for  $\zeta$-a.e. $u$}.$$
Set $\sigma^x = \pi^{T(x)}$, $\sigma (dx \, dy) = \sigma^x(dy) \mu(dx)$. Similarly to the proof of Theorem \ref{th_conv1} 
we obtain that $\sigma \in \Pi(\mu, \nu)$ and $g(\sigma^x) = T(x)$ for $\mu$-a.e. $x$. 

Therefore, the infima in the problems (\ref{gp}) and (\ref{conv2}) coincide.
 
\end{proof}


\end{document}